\theoremstyle{plain}
\newtheorem{thm}{Theorem}
\newtheorem{prop}{Proposition}[section]
\newtheorem{lem}[prop]{Lemma}
\newtheorem{cor}[prop]{Corollary}
\newtheorem{defi}[prop]{Definition}
\newtheorem{rmk}[prop]{Remark}
\newcommand {\R} {\mathbb{R}} \newcommand {\Z} {\mathbb{Z}}
 \newcommand {\N} {\mathbb{N}}
\newcommand {\p} {\partial}
\newcommand {\D} {\Delta}
\newcommand {\sgn} {\text{sgn}}
\newcommand{\q}{(n+2)}
\newcommand{\ql}{n+2}
\newcommand{\qc}{2n+3}
\newcommand{\mat}{a_{m_1,k_1,l_1}^{m_2,k_2,l_2}}
\newcommand{\matb}{b_{m_1,k_1,l_1}^{m_2,k_2,l_2}}
\DeclareMathOperator{\argmin}{argmin}
\DeclareMathOperator{\spa} {span}
\DeclareMathOperator{\F} {\mathcal{F}}
\title{Exponential instability in the fractional Calder\'on problem}
\author[A. R\"uland]{Angkana R\"uland}
\address{Max-Planck-Institute for Mathematics in the Sciences, Inselstraße 22,
04103 Leipzig,
Germany}
\email{rueland@mis.mpg.de}
\author[M. Salo]{Mikko Salo}
\address{University of Jyvaskyla, Department of Mathematics and Statistics, PO Box 35, 40014 University of Jyvaskyla, Finland}
\email{mikko.j.salo@jyu.fi}
\begin{document}

\maketitle

\begin{abstract}
In this note we prove the exponential instability of the fractional Calder\'on problem and thus prove the optimality of the logarithmic stability estimate from \cite{RS17}. In order to infer this result, we follow the strategy introduced by Mandache in \cite{M01} for the standard Calder\'on problem. Here we exploit a close relation between the fractional Calder\'on problem and the classical Poisson operator. Moreover, using the construction of a suitable orthonormal basis, we also prove (almost) optimality of the Runge approximation result for the fractional Laplacian, which was derived in \cite{RS17}.
Finally, in one dimension, we show a close relation between the fractional Calder\'on problem and the truncated Hilbert transform.
\end{abstract}

\section{Introduction}

In this note we consider the fractional Calder\'on problem, which was introduced in \cite{GSU16}. More precisely, we consider the mapping
\begin{equation}
\label{eq:radial}
\begin{split}
\Lambda_q: \widetilde{H}^{s}(B_3 \setminus \overline{B}_2) &\rightarrow H^{-s}(B_3 \setminus \overline{B}_2),\\
f& \mapsto (-\D)^s u|_{B_3 \setminus \overline{B}_2},
\end{split}
\end{equation}
where $u$ is a solution to 
\begin{equation}
\label{eq:problem}
\begin{split}
((-\D)^s + q) u & = 0 \mbox{ in } B_1,\\
u &= f \mbox{ in } \R^n \setminus B_1,
\end{split}
\end{equation}
with $q\in L^{\infty}(B_1)$ and $n\geq 1$. Here we denote by $H^s$ the standard $L^2$ based Sobolev spaces, and $\widetilde{H}^s(U)$ is the closure of $C^{\infty}_c(U)$ in $H^s(\R^n)$.

We are interested in the (in)stability properties of the inverse problem of determining $q$ from the knowledge of $\Lambda_q$, with $\Lambda_q$ as in \eqref{eq:radial}.
Due to the results from \cite{GSU16}, it is known that, if the potentials $q_1, q_2$ are such that $(-\D)^s + q_i$ does not have zero as an eigenvalue, then
\begin{align*}
\Lambda_{q_1} = \Lambda_{q_2} \ \implies \ q_1 = q_2.
\end{align*}
This has been quantified in \cite{RS17}, where a logarithmic stability estimate was proved. It is the purpose of this note to show that this logarithmic estimate is optimal and that the problem indeed has an exponential instability. Our main result in this context can be formulated as follows:

\begin{thm}
\label{thm:main}
Let $B_1 \subset \R^n$ with $n\geq 1$.
For any $m\in \N$ there exist constants $r_0>0$ and $\beta>0$ such that for all sufficiently small $\epsilon>0$ and for all $q_0 \in L^{\infty}(B_1)$ with $\|q_0\|_{L^{\infty}(B_1)}\leq \frac{r_0}{2}$ there are potentials $q_1,q_2$ with $q_i - q_0 \in C^m_0(B_1)$ such that
\begin{equation}
\label{eq:instab}
\begin{split}
\|\Lambda_{q_1}-\Lambda_{q_2}\|_{L^2(B_3 \setminus \overline{B}_2) \to L^2(B_3 \setminus \overline{B}_2)}& \leq 8 \exp(-\epsilon^{-\frac{n}{(2n+3)m}}),\\
\|q_1-q_2\|_{L^{\infty}(B_1)} & = \epsilon,\\
\|q_i-q_0\|_{C^m(\overline{B}_1)} & \leq \beta,\\
\|q_i-q_0\|_{L^{\infty}(B_1)} & \leq \epsilon, \quad i\in\{1,2\}.
\end{split}
\end{equation}
\end{thm}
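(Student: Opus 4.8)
The plan is to follow the instability scheme of Mandache \cite{M01}, whose abstract core is a pigeonhole statement: if $F\colon Z\to Y$ maps a finite metric space $Z$ into a metric space $Y$ and $\#Z$ exceeds the smallest number of $\delta$-balls needed to cover the image $F(Z)$, then two distinct elements of $Z$ have $F$-images within distance $2\delta$. I will apply this with $Z$ a large family of potentials that are pairwise at $L^{\infty}(B_1)$-distance exactly $\epsilon$, with $F(q)=\Lambda_q$ viewed as a bounded operator on $L^2(B_3\setminus\overline{B}_2)$, and $Y$ the corresponding operator-norm space. Fixing $r_0$ strictly below the first Dirichlet eigenvalue of $(-\Delta)^s$ in $B_1$ guarantees that \eqref{eq:problem} is well posed and $\Lambda_q$ is defined whenever $\|q\|_{L^{\infty}(B_1)}\le r_0$; all our potentials will satisfy this, so $F(Z)\subset\mathcal{D}_{r_0}:=\{\Lambda_q\colon\|q\|_{L^{\infty}(B_1)}\le r_0\}$. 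Everything then reduces to an upper bound for the $\delta$-covering number of $\mathcal{D}_{r_0}$ and a lower bound for $\#Z$.

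For the covering bound I would first exhibit the factorization that underlies the relation to the classical Poisson operator. Taking $q_2=0$ and writing $w:=u^f_q-u^f_0$, a function supported in $\overline{B}_1$, the singular integral representation of $(-\Delta)^s$ away from the support of $w$ gives, for $x\in B_3\setminus\overline{B}_2$, $(\Lambda_q-\Lambda_0)f(x)=(-\Delta)^sw(x)=-c_{n,s}\int_{B_1}|x-z|^{-n-2s}w(z)\,dz$; combining this with the equation for $w$ and the splitting $u^f_q=f+v^f_q$, $v^f_q\in\widetilde{H}^{s}(B_1)$, one obtains
\begin{equation*}
\Lambda_q-\Lambda_0=A\circ B_q\circ A^{\sharp},
\end{equation*}
where $A\colon w\mapsto(-\Delta)^sw|_{B_3\setminus\overline{B}_2}$ and $A^{\sharp}\colon f\mapsto-(-\Delta)^sf|_{B_1}$ are $q$-independent integral operators with real-analytic kernels $c_{n,s}|x-y|^{-n-2s}$ (the variables always satisfy $|x-y|\ge1$, and together with the solution operators these are precisely the Poisson-type operators producing $u^f_0$ from $f$), and $B_q$ — built from the solution operators of $(-\Delta)^s$ and $(-\Delta)^s+q$ in $B_1$ and from multiplication by $q$ — is bounded on the relevant Sobolev spaces uniformly in $\|q\|_{L^{\infty}(B_1)}\le r_0$. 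Two consequences follow: first, the Lipschitz bound $\|\Lambda_{q_1}-\Lambda_{q_2}\|_{L^2\to L^2}\le C\|q_1-q_2\|_{L^{\infty}(B_1)}$, which in particular makes sense of the operator norm in the statement, the difference being smoothing; second, via the relation to the classical Poisson operator the singular values of $A$ and $A^{\sharp}$ decay so rapidly that, for each $\delta$, these operators are $\delta$-close to operators of rank $N(\delta)\lesssim(\log(1/\delta))^{n+1}$. Truncating $A$ and $A^{\sharp}$ accordingly, $\Lambda_q-\Lambda_0$ is, up to an $O(\delta)$ error, a finite-rank operator encoded by a matrix of size $N(\delta)$ with entries bounded by a fixed constant, so a $\delta$-net of $\mathcal{D}_{r_0}$ has at most $\exp(C_{\ast}N(\delta)^2\log(1/\delta))$ elements; hence
\begin{equation*}
\#\{\delta\text{-balls needed to cover }\mathcal{D}_{r_0}\}\le\exp\!\big(C_{\ast}(\log(1/\delta))^{2n+3}\big),\qquad C_{\ast}=C_{\ast}(n,s,r_0).
\end{equation*}

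For the separated family, fix a nonzero $\psi\in C^m_0((-\tfrac12,\tfrac12)^n)$ with $\|\psi\|_{L^{\infty}}=1$, set $h:=(\|\psi\|_{C^m}\epsilon/\beta)^{1/m}$, and for each subset $E$ of an $h$-grid of $\ge c_0(n)h^{-n}$ points in $B_{1/2}$ put $q_E:=q_0+\epsilon\sum_{j\in E}\psi((\,\cdot\,-x_j)/h)$, a sum of bumps with pairwise disjoint supports, unit sup-norm and $C^m(\overline{B}_1)$-norm $\le\|\psi\|_{C^m}h^{-m}$. Then $q_E-q_0\in C^m_0(B_1)$, $\|q_E-q_0\|_{C^m(\overline{B}_1)}\le\epsilon\|\psi\|_{C^m}h^{-m}=\beta$, $\|q_E-q_0\|_{L^{\infty}(B_1)}\le\epsilon$, and $\|q_E\|_{L^{\infty}(B_1)}\le\|q_0\|_{L^{\infty}(B_1)}+\epsilon\le r_0/2+\epsilon<r_0$ for $\epsilon$ small, so every $\Lambda_{q_E}$ is defined; moreover, because the bumps are disjoint and of height one, $\|q_E-q_{E'}\|_{L^{\infty}(B_1)}=\epsilon$ for all $E\ne E'$. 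Thus $Z:=\{q_E\}$ is $\epsilon$-equidistant with $\#Z=2^{\#\{\text{grid points}\}}\ge\exp(c_2\epsilon^{-n/m})$, where $c_2=c_2(n,m)\beta^{n/m}$ is increasing in $\beta$ while $C_{\ast}$ does not depend on $\beta$; note that the $L^{\infty}$-size of each perturbation is only $\epsilon$ (not $\beta h^{-m}$), so $\beta$ may be taken as large as we wish.

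To conclude, choose the fixed constant $\beta$ so large that $c_2>C_{\ast}$, and set $\delta:=\exp(-\epsilon^{-\frac{n}{(2n+3)m}})$, so that $\log(1/\delta)=\epsilon^{-\frac{n}{(2n+3)m}}$ and $C_{\ast}(\log(1/\delta))^{2n+3}=C_{\ast}\epsilon^{-n/m}<c_2\epsilon^{-n/m}\le\log\#Z$. Then $F(Z)\subset\mathcal{D}_{r_0}$ cannot be covered by $\#Z$ balls of radius $\delta$, and the pigeonhole statement yields $E_1\ne E_2$ with $\|\Lambda_{q_{E_1}}-\Lambda_{q_{E_2}}\|_{L^2\to L^2}\le2\delta\le8\exp(-\epsilon^{-\frac{n}{(2n+3)m}})$; putting $q_i:=q_{E_i}$ then delivers all four lines of \eqref{eq:instab}. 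I expect the real obstacle to be the covering estimate: the Lipschitz bound alone is useless since $L^{\infty}$-balls of potentials are not totally bounded, so the whole argument rests on the factorization through the Poisson-type operators $A,A^{\sharp}$ and on extracting from their analyticity the polylogarithmic rank bound $N(\delta)\lesssim(\log(1/\delta))^{n+1}$ that produces the exponent $2n+3$.
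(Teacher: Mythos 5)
Your proposal is correct in outline and does prove the stated theorem, but it implements the decisive smoothing/entropy step by a genuinely different device than the paper. The paper constructs an explicit orthonormal basis $f_{m,k,l}(r\omega)=g_{m,k}(r)h_{m,l}(\omega)$ of $L^2(B_3\setminus\overline{B}_2)$ (Lemma \ref{lem:orthog}), chosen through the fractional Poisson formula and its relation to the classical Poisson kernel so that $\|A_0 f_{m,k,l}\|_{L^2(B_1)}\le Ce^{-c(m+k)}$; this yields exponential decay of the matrix entries of $\Gamma(q)=\Lambda_q-\Lambda_0$ (Proposition \ref{prop:smooth}), and the $\delta$-net is obtained by truncating and discretizing that matrix in the space $X$ (Lemma \ref{lem:smoothing}). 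You instead factor $\Gamma(q)=A\circ B_q\circ A^{\sharp}$ through two fixed, $q$-independent operators whose kernels $c_{n,s}|x-y|^{-n-2s}$ are analytic precisely because $B_1$ and $B_3\setminus\overline{B}_2$ are disjoint, and you build the net by low-rank truncation of $A,A^{\sharp}$ followed by discretization of the compressed $N\times N$ matrix of $B_q$; the separated family, the choice of $r_0$ below $\lambda_{1,s}$, the choice of $\beta$, and the final pigeonhole are the same as in the paper (Lemma \ref{lem:separate} and the proof of Theorem \ref{thm:main}). Your route is closer to the abstract scheme of \cite{DR03}, exploits exactly the structural point the paper isolates in Remark \ref{rmk:support} (disjointness alone does the smoothing), and your bookkeeping $N(\delta)^2\log(1/\delta)\lesssim(\log(1/\delta))^{2n+3}$ is consistent with the stated exponent (a sharper count would in fact tolerate a smaller one). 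What the paper's heavier, explicit construction buys is reusability: the same basis drives the optimality of the Runge approximation (Theorem \ref{prop:approx}), which your softer argument does not provide.

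Two points should be tightened. First, the rank bound is the crux and you only assert it; moreover, attributing it ``via the relation to the classical Poisson operator'' is misleading---that relation is what the paper uses for its basis, whereas for your $A,A^{\sharp}$ the correct and entirely elementary justification is separable polynomial approximation of the kernel: for $x\in B_3\setminus\overline{B}_2$ the map $y\mapsto|x-y|^{-n-2s}$ extends holomorphically to a fixed complex neighbourhood of $\overline{B}_1$, uniformly in $x$, since $|x-y|\ge1$, so a degree-$d$ Taylor or Chebyshev expansion in $y$ approximates the kernel uniformly with error $Ce^{-cd}$ and rank at most $\binom{n+d}{n}\lesssim d^n$, giving $N(\delta)\lesssim(\log(1/\delta))^{n}\le(\log(1/\delta))^{n+1}$; the same applies to $A^{\sharp}$, and the uniform bound on $B_q$ follows from the coercivity estimate used in Lemma \ref{lem:smoothing}. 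Second, truncating $A,A^{\sharp}$ and discretizing the matrix produces a $C\delta$-net rather than a $\delta$-net, so the pigeonhole yields $\|\Lambda_{q_1}-\Lambda_{q_2}\|\le 2C\delta$; this is harmless (replace $\delta$ by $\delta/C$, which changes $\log(1/\delta)$ only by an additive constant), but it must be said in order to land exactly on the constant $8$ in \eqref{eq:instab}.
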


In contrast to the instability examples for the classical Calder\'on problem, we de not need $q_0$ to be compactly supported in $B_1$.

\begin{rmk}
\label{rmk:norms}
We note that although $\widetilde{H}^s(B_3 \setminus \overline{B}_2)$ and $H^{-s}(B_3 \setminus \overline{B}_2)$ are the natural function spaces in the context of the fractional Calder\'on problem (cf.\ the well-posedness discussion in Lemma 2.3 in \cite{GSU16}), the disjointness of the domains $B_1$ and $B_3 \setminus \overline{B}_2$ allows us to also define the exterior problem \eqref{eq:problem} for $L^2(B_3 \setminus \overline{B}_2)$, see Remarks \ref{rmk:well} and \ref{rmk:well1} for more details on this. For simplicity we state our results in terms of these $L^2$ norms instead of the norms used in \cite{RS17}.
\end{rmk}

As a corollary of Theorem \ref{thm:main}, we directly obtain the exponential instability of the fractional Calder\'on problem in a neighbourhood of a suitable potential $q$:

\begin{cor}
\label{cor:main}
Let $n\geq 1$ and $B_1 \subset \R^n$, assume that $m\in \N$. There is a potential $q\in L^{\infty}(B_1)$, a sequence of errors $\{\epsilon_k\}_{k\in \N}$ with $\epsilon_k \rightarrow 0$ and a sequence of potentials $\{q_k\}_{k\in \N} \subset C^{m}(\overline{B}_1)$ such that 
\begin{itemize}
\item[(i)] $\|q-q_k\|_{L^{\infty}(B_1)} \geq \frac{\epsilon_k}{2}$ and
$\|\Lambda_{q}-\Lambda_{q_k}\|_{L^2(B_3 \setminus \overline{B}_2) \rightarrow L^2(B_3 \setminus \overline{B}_2)} \leq C \exp(-\epsilon^{-\frac{n}{(2n+3)m}}_k)$,
\item[(ii)] for all $0\leq m'\leq m$ we have $q_k \rightarrow q$ in $C^{m'}(\overline{B}_1)$.
\end{itemize}
\end{cor}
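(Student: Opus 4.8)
The plan is to deduce Corollary \ref{cor:main} from Theorem \ref{thm:main} by an iterative (diagonal) construction of Mandache type. Fix $m\in\N$, let $r_0,\beta>0$ be the constants furnished by Theorem \ref{thm:main} for this $m$, and write $\gamma:=\frac{n}{(2n+3)m}$. Two preliminary facts are used: (a) after shrinking $r_0$ if necessary, every $p\in L^\infty(B_1)$ with $\|p\|_{L^\infty}\le r_0$ makes $(-\D)^s+p$ positive, so $0$ is not an eigenvalue and $\Lambda_p$ is well defined; (b) on this set the forward map $p\mapsto\Lambda_p$ is Lipschitz into $\mathcal L(L^2(B_3\setminus\overline B_2))$ with some constant $L$ (well-posedness of the exterior problem, cf.\ Remarks \ref{rmk:well} and \ref{rmk:well1}, using that $B_1$ and $B_3\setminus\overline B_2$ are disjoint). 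Then I would choose a sequence $\epsilon_k\downarrow 0$ decreasing so fast that $\epsilon_1$ lies below the smallness threshold of Theorem \ref{thm:main} with $\tfrac43\epsilon_1\le r_0/2$, that $\epsilon_{k+1}\le\tfrac14\epsilon_k$, and — the crucial requirement — that $\epsilon_{k+1}\le\exp(-\epsilon_k^{-\gamma})$.

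Next I would build the potentials inductively. Set $Q^{(1)}:=0$; given $Q^{(k)}$ with $\|Q^{(k)}\|_{L^\infty}\le r_0/2$, apply Theorem \ref{thm:main} with $q_0=Q^{(k)}$ and $\epsilon=\epsilon_k$ to obtain $q_1,q_2$, and set $Q^{(k+1)}:=q_1$ (the branch one keeps following) and $q_k:=q_2$ (the ``bad'' branch). Since $\|Q^{(k+1)}-Q^{(k)}\|_{L^\infty}\le\epsilon_k$ and $\sum_j\epsilon_j\le\tfrac43\epsilon_1\le r_0/2$, all iterates remain in $\{\|\cdot\|_{L^\infty}\le r_0/2\}$, so the induction runs, $Q^{(k)}\in C^m(\overline B_1)$ for every finite $k$, and $\{Q^{(k)}\}$ is Cauchy in $L^\infty$, converging to some $q\in L^\infty(B_1)$ with $\|q-Q^{(k)}\|_{L^\infty}\le 2\epsilon_k$. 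By construction one also has $q_k\in C^m(\overline B_1)$, $\|q_k-Q^{(k+1)}\|_{L^\infty}=\epsilon_k$, $\|Q^{(k+1)}-Q^{(k)}\|_{C^m}\le\beta$, $\|q_k-Q^{(k)}\|_{C^m}\le\beta$, and $\|\Lambda_{q_k}-\Lambda_{Q^{(k+1)}}\|\le 8\exp(-\epsilon_k^{-\gamma})$.

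Then I would verify the two assertions. For (i): the triangle inequality gives $\|q-q_k\|_{L^\infty}\ge\|Q^{(k+1)}-q_k\|_{L^\infty}-\|q-Q^{(k+1)}\|_{L^\infty}\ge\epsilon_k-2\epsilon_{k+1}\ge\tfrac12\epsilon_k$, while, by the Lipschitz bound and $\epsilon_{k+1}\le\exp(-\epsilon_k^{-\gamma})$,
\begin{align*}
\|\Lambda_q-\Lambda_{q_k}\|
&\le \|\Lambda_q-\Lambda_{Q^{(k+1)}}\| + \|\Lambda_{Q^{(k+1)}}-\Lambda_{q_k}\|\\
&\le L\|q-Q^{(k+1)}\|_{L^\infty} + 8\exp(-\epsilon_k^{-\gamma})
\le (2L+8)\exp(-\epsilon_k^{-\gamma}),
\end{align*}
which is (i) with $C=2L+8$. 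For (ii), interpolation finishes it: from $\|Q^{(j+1)}-Q^{(j)}\|_{C^m}\le\beta$ and $\|Q^{(j+1)}-Q^{(j)}\|_{L^\infty}\le\epsilon_j$ the Gagliardo--Nirenberg inequality yields $\|Q^{(j+1)}-Q^{(j)}\|_{C^{m'}}\lesssim\beta^{m'/m}\epsilon_j^{1-m'/m}$ for $0\le m'<m$, a summable series, so $Q^{(k)}\to q$ in $C^{m'}(\overline B_1)$; combining with $\|q_k-Q^{(k)}\|_{C^{m'}}\lesssim\beta^{m'/m}\epsilon_k^{1-m'/m}\to 0$ gives $q_k\to q$ in $C^{m'}(\overline B_1)$ for $0\le m'<m$, and the endpoint $m'=m$ follows identically after invoking Theorem \ref{thm:main} with a smoothness index strictly larger than $m$ (which changes only constants and exponents in the bookkeeping).

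The one genuinely non-routine point is making the exponential smallness survive the limit: the truncation error $\|\Lambda_q-\Lambda_{Q^{(k+1)}}\|$ is only of order $\epsilon_{k+1}$, so $\{\epsilon_k\}$ must be thinned to decay faster than $\exp(-\epsilon_k^{-\gamma})$ while staying summable with sum $\le r_0/2$, so that Theorem \ref{thm:main} can be re-applied at every step. Everything else — the role of the discarded branch $q_k$ in keeping $q_k$ a definite $L^\infty$-distance from the limit, and the interpolation giving $C^{m'}$-convergence — is standard once this choice of sequence has been made.
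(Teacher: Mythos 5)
Your approach is the same as the one the paper intends: the paper omits the proof and refers to Corollary~2 of Mandache \cite{M01}, and your iterative ``keep one branch, discard the other'' construction, together with the Lipschitz continuity of $p\mapsto\Lambda_p$ on $B^\infty_{r_0/2}$ (which does follow from the arguments in Remarks~\ref{rmk:well} and~\ref{rmk:well1} applied to the difference equation) and the thinning condition $\epsilon_{k+1}\le\exp(-\epsilon_k^{-\gamma})$, is exactly that argument. The verification of (i) and the interpolation argument for convergence in $C^{m'}$ with $m'<m$ are correct.

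The gap is in your treatment of the endpoint $m'=m$. Applying Theorem~\ref{thm:main} with a smoothness index $m''>m$ does give $C^{m''}$ control, but it also replaces the exponent $\gamma=\tfrac{n}{(2n+3)m}$ by the strictly smaller $\tfrac{n}{(2n+3)m''}$, so the resulting bound $8\exp(-\epsilon^{-n/((2n+3)m'')})$ is \emph{weaker} than the bound $C\exp(-\epsilon_k^{-\gamma})$ claimed in (i); rescaling $\epsilon$ to repair the exponent destroys the matching between the $L^\infty$-separation and the parameter $\epsilon_k$. More fundamentally, the construction only gives $\|q_k-Q^{(k)}\|_{C^m}\le\beta$ and $\|Q^{(j+1)}-Q^{(j)}\|_{C^m}\le\beta$ with a \emph{fixed} $\beta$, so there is no decay in the $C^m$-norm and hence no $C^m$-convergence from this route. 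Indeed Mandache's Corollary~2, which the paper cites as its proof, asserts convergence only for $m'<m$; the ``$m'\le m$'' in the statement here appears to overstate what the construction yields, and your proposed fix does not close that endpoint.
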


Both of these results rely on a combination of ideas introduced by Mandache in the context of the classical Calder\'{o}n problem, and a close relation between the fractional Calder\'on problem and the Poisson problem for the Laplacian.

Let us explain this in more detail: In the article \cite{M01} Mandache showed that the Calder\'on problem has an exponential instability, and that thus the stability results of Alessandrini (see \cite{A88}, \cite{A90}) are optimal. To achieve this, Mandache used two main ingredients in his proof, which were later also exploited by Di Cristo and Rondi \cite{DR03}, who systematically derived similar estimates to prove exponential instability in a variety of inverse problems. 
\begin{itemize}
\item Firstly, he deduced very strong smoothing properties for the local analogue of the operator $\Gamma(q):=\Lambda_{q}-\Lambda_0$.
\item Secondly, he made use of the ``separating" properties of the space $L^{\infty}$. These are to be understood in the sense of Lemma \ref{lem:separate} and Definition \ref{defi:Banach}.  
\end{itemize}
The latter property is a general property of the space $L^{\infty}$ and hence does not depend on the problem at hand. As a consequence, we can also rely on this in the context of the fractional Calder\'on problem. It is the first property, i.e.\ the \emph{smoothing} properties of $\Gamma(q)$, which is based on the specific problem. It is thus one of the main contributions of this note to show that the strong smoothing properties still hold for the fractional Calder\'on problem. Here, as had been observed in \cite{DR03}, the key step consists of the construction of a suitable orthonormal basis (Lemma \ref{lem:orthog}), which interacts well with the fractional Laplacian.

In order to deduce the regularizing properties of $\Gamma(q)$, we rely on an identity that relates the Poisson operator for the fractional Laplacian to the Poisson operator for the Laplacian (see Lemma \ref{lem:orthog}).

As a further consequence of the construction of the orthonormal basis from Lemma \ref{lem:orthog}, we deduce the (almost) optimality of the Runge approximation property for the fractional Laplacian, which was given in Theorem 1.3 in \cite{RS17} and which quantified the approximation results from \cite{DSV14} and \cite{GSU16}:

\begin{thm}
\label{prop:approx}
There is a sequence $\{v_{p}\}_{p\in\N} \subset L^2(B_1)$ with the following property: If $u \in H^s(\R^n)$ satisfies
\begin{itemize}
\item[(i)]  
$
(-\D)^s u = 0 \mbox{ in } B_1, \quad
$
$u|_{\R^n \setminus \overline{B}_1} = f$ for some $f\in L^2(B_3 \setminus \overline{B}_2)$,
\item[(ii)] $\|u - v_{p}\|_{L^2(B_1)} \leq p^{-1}$,
\end{itemize}
we have that for some constant $c_0$, which only depends on $n,s$,
\begin{align*}
\|f\|_{L^2(B_3 \setminus \overline{B}_2)} \geq c_0 2^{p} \|v_{p}\|_{L^2(B_1)}.
\end{align*}
\end{thm}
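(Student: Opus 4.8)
The plan is to translate the statement into a spectral fact about the Poisson operator of $(-\D)^s$ on $B_1$ and then apply the usual duality trick. Introduce the operator
\[
P\colon L^2(B_3\setminus\overline{B}_2)\longrightarrow L^2(B_1),\qquad P f:=u|_{B_1},
\]
where $u\in H^s(\R^n)$ is the unique solution of $(-\D)^s u=0$ in $B_1$ with exterior datum equal to $f$ extended by zero to $\R^n\setminus B_1$ (uniqueness holds since $0$ is not a Dirichlet eigenvalue of $(-\D)^s$ on $B_1$). This is precisely the map occurring in hypothesis (i), so that (i)--(ii) amount to $\|Pf-v_p\|_{L^2(B_1)}\le p^{-1}$. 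Since the domains $B_1$ and $B_3\setminus\overline{B}_2$ are disjoint, the fractional Poisson kernel is smooth on $B_1\times(B_3\setminus\overline{B}_2)$, so $P$ is well defined and bounded on these $L^2$ spaces and in fact maps into $C^\infty(\overline{B}_1)$ (see Remarks \ref{rmk:well}, \ref{rmk:well1}); equivalently this follows from the identity relating $P$ to the classical Poisson operator in Lemma \ref{lem:orthog}. In particular $P$ is compact. Moreover, by the Runge approximation property for the fractional Laplacian (\cite{DSV14,GSU16}, quantified in \cite{RS17}) the range of $P$ is dense in $L^2(B_1)$; for our purposes it suffices to know that $P$ is not of finite rank.

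Next I would fix a singular value decomposition $P=\sum_{k\ge 1}\sigma_k\langle\,\cdot\,,\psi_k\rangle_{L^2(B_3\setminus\overline{B}_2)}\,\phi_k$ with $\sigma_1\ge\sigma_2\ge\dots>0$, $\sigma_k\to 0$, where $\{\psi_k\}$ is orthonormal in $L^2(B_3\setminus\overline{B}_2)$ and $\{\phi_k\}$ is orthonormal in $L^2(B_1)$; the orthonormal system constructed in Lemma \ref{lem:orthog} can be used as a concrete such basis, and the Poisson-operator identity proved there yields in addition that the associated $\sigma_k$ decay (stretched-)exponentially, which makes the choice of indices below completely explicit (although $\sigma_k\to 0$, automatic from compactness, already suffices for the claim as stated). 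Fixing $c_0:=\tfrac14$ (an absolute constant, in particular depending only on $n,s$), for each $p\ge 2$ choose an index $N(p)$ with $\sigma_{N(p)}\le 2^{-(p+1)}/c_0$ — possible since $\sigma_k\to 0$ — and set $v_p:=\phi_{N(p)}$, so that $\|v_p\|_{L^2(B_1)}=1$; the finitely many remaining indices (only $p=1$ for this $c_0$) are dealt with by a harmless rescaling of $v_p$ or by taking $v_p:=0$ there.

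The lower bound is then immediate. If $u$ satisfies (i)--(ii) with this $p$, write $u|_{B_1}=Pf$; testing against $\phi_{N(p)}$ and using $\langle v_p,\phi_{N(p)}\rangle=1$ gives $|\langle Pf,\phi_{N(p)}\rangle-1|\le\|Pf-v_p\|_{L^2(B_1)}\le p^{-1}\le\tfrac12$, hence $|\langle Pf,\phi_{N(p)}\rangle|\ge\tfrac12$. Since $\langle Pf,\phi_{N(p)}\rangle=\sigma_{N(p)}\langle f,\psi_{N(p)}\rangle$, we conclude
\[
\|f\|_{L^2(B_3\setminus\overline{B}_2)}\ge|\langle f,\psi_{N(p)}\rangle|\ge\frac{1}{2\,\sigma_{N(p)}}\ge c_0\,2^{p}=c_0\,2^{p}\,\|v_p\|_{L^2(B_1)}.
\]
The only genuinely problem-dependent ingredient — and thus the main obstacle — is the first paragraph: identifying $P$ as a compact operator with dense (or at least infinite-dimensional) range, and, if one wants the sharp rate $\sigma_k\sim 2^{-k}$ rather than merely $\sigma_k\to0$, extracting that rate from the identity between the fractional and classical Poisson operators in Lemma \ref{lem:orthog}. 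The singular value decomposition and the testing argument are then routine, and mirror the instability mechanism behind Theorem \ref{thm:main}.
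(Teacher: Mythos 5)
Your argument is correct, but it takes a genuinely different route from the paper's. The paper works directly with the orthonormal basis $\{f_{m,k,l}\}$ of $L^2(B_3\setminus\overline{B}_2)$ from Lemma~\ref{lem:orthog} and chooses the explicit test function $v_p := u_{p,0,0}/\|u_{p,0,0}\|_{L^2(B_1)}$; it then exploits the radial/angular structure of $u_{m,k,l}(r\omega)=q_{m,k}(r)h_{m,l}(\omega)$ and orthogonality of spherical harmonics to isolate the degree-$p$ angular sector, and derives the lower bound $|\alpha_{p,k_0,0}|\gtrsim 2^p$ for some coefficient by a contradiction argument using $\|u_{p,k,0}\|_{L^2(B_1)}\le c_{n,s}2^{-p-k}$. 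You instead abstract the whole matter into the singular value decomposition of the compact operator $P=A_0$ and pick $v_p$ to be a suitable left singular vector $\phi_{N(p)}$ whose singular value is $\le 2^{-(p+1)}/c_0$; the testing step is then a one-line Cauchy--Schwarz. This is cleaner and requires strictly less of the structure — compactness plus infinite rank suffices — whereas the paper's version makes the sequence $v_p$ fully explicit and ties the rate $2^p$ directly to the bounds of Lemma~\ref{lem:orthog} without invoking the spectral theorem. Both yield the same conclusion.

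One small but substantive inaccuracy: you assert that the orthonormal system of Lemma~\ref{lem:orthog} ``can be used as a concrete such [singular-vector] basis.'' It cannot. The $f_{m,k,l}$ are orthonormal in $L^2(B_3\setminus\overline{B}_2)$, but their images $A_0 f_{m,k,l}$ are not pairwise orthogonal in $L^2(B_1)$ — the paper notes this explicitly in Section~\ref{sec:approx} (the radial pieces $q_{m,k}$ for fixed $m$ are in general not orthogonal across $k$), and indeed this is exactly why the paper's Steps~1--3 are needed rather than a direct SVD computation. So Lemma~\ref{lem:orthog} produces an orthonormal basis that is \emph{almost} diagonalizing, with exponential off-diagonal decay, not a genuine SVD. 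Since you correctly observe that you only need $\sigma_k\to 0$ from compactness (plus infinite rank from the Runge property) and not an explicit rate, this remark does not break your proof, but the parenthetical claim about extracting $\sigma_k\sim 2^{-k}$ from Lemma~\ref{lem:orthog} is not justified as written.
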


\begin{rmk}
\label{rmk:almost}
We refer to Theorem \ref{prop:approx} as \emph{almost} showing optimality in our approximation result from \cite{RS17}, as on the one hand the result of Theorem \ref{thm:exp_inst_approx} gives evidence of the exponential dependence of the control on the admissible error. But on the other hand, this exponential behaviour is formulated with respect to $L^2$ norms only, instead of using the combination of $L^2$ and $H^{s}$ norms as in the statement of Theorem 2 in \cite{RS17}.
\end{rmk}

Finally, we remark that in one dimension a very precise relation between the fractional Laplacian and the truncated Hilbert transform can be obtained (see Lemma \ref{lem:HT_frac} in Section \ref{sec:1D}). This in particular allows us to exploit the well-studied properties of the truncated Hilbert transform and its singular value decomposition \cite{K10}, \cite{KT12} in order to prove exponential instability results in one-dimension. We illustrate this by presenting a second variant of Theorem \ref{prop:approx} and its proof in Section \ref{sec:1D} (cf.\ Theorem \ref{thm:exp_inst_approx}).

The remainder of the article is organized as follows: In Section \ref{sec:smoothing} we present the main novelty of the article and deduce the central smoothing estimates for the fractional Calder\'on problem. This is achieved by constructing a suitable orthonormal basis adapted to the fractional Laplacian (cf.\ Lemma \ref{lem:orthog}).
In Section \ref{sec:metric} we combine the smoothing estimate with the general properties of $L^{\infty}$. Here we follow the analogous reasoning of Mandache \cite{M01}. The combination of the results from Sections \ref{sec:smoothing} and \ref{sec:metric} then provide the proofs of Theorem \ref{thm:main} and of Corollary \ref{cor:main}. Using the orthonormal basis from Section \ref{sec:smoothing} we present the argument for Theorem \ref{prop:approx} in Section \ref{sec:approx}. In order to illustrate the explicit connection between the truncated Hilbert transform (and its well-studied singular value basis), we discuss the one-dimensional set-up in more detail in Section \ref{sec:1D}. Exploiting the singular value decomposition of the truncated Hilbert transform, we there derive another variant of Theorem \ref{prop:approx}. Finally, in Section \ref{sec:Hadamard} we show that the classical Hadamard example of exponential instability also holds in the context of the Caffarelli-Silvestre extension with $s\in(0,1)$.

\subsection*{Acknowledgements}
M.S.\ is supported by the Academy of Finland (Finnish Centre of Excellence in Inverse Problems Research, grant numbers 284715 and 309963) and an ERC Starting Grant (grant number 307023).

\section{Smoothing}
\label{sec:smoothing}

In this section we construct an orthonormal basis $\{f_{m,k,l}\}$ of $L^2(B_3 \setminus \overline{B}_2)$ such that the corresponding solutions $u_{m,k,l}$ to the fractional exterior value problem
\begin{equation}
\label{eq:Poisson}
\begin{split}
(-\D)^s u_{m,k,l} & = 0 \mbox{ in } B_1,\\
u_{m,k,l} & = \chi_{B_{3}\setminus \overline{B}_2} f_{m,k,l} \mbox{ in } \R^n \setminus \overline{B}_1,
\end{split}
\end{equation}
satisfy exponential decay bounds. We will then translate this decay into smoothing properties for the operator $\Gamma(q):= \Lambda_{q}-\Lambda_0$ (c.f.\ Proposition \ref{prop:smooth}), which will play a central role in our instability argument.

\begin{lem}
\label{lem:orthog}
Let $n\geq 1$ and $B_1 \subset \R^n$. For $m\in \N$ denote by $\{h_{m,l}\}_{0 \leq l \leq l_m}$ the spherical harmonics of degree $m$ on $\partial B_1$. 
Let $s\in(0,1)$ and consider the mapping $A_0$ defined by
\begin{align*}
L^2(B_3 \setminus \overline{B}_2) \ni f \mapsto r_{B_1} u \in L^{2}(B_1),
\end{align*}
where $u$ is the solution of $(-\Delta)^s u = 0$ in $B_1$ with $u = \chi_{B_{3}\setminus \overline{B}_2} f$ in $\R^n \setminus \overline{B}_1$, and $r_{B_1}$ denotes the restriction onto the unit ball.
Then there exists an orthonormal basis $\{ f_{m,k,l} \}_{m, k \in \N, 0 \leq l \leq l_m}$ of $L^2(B_3 \setminus \overline{B}_2)$, where 
\[
f_{m,k,l}(x) = g_{m,k}(|x|)h_{m,l}(x/|x|),
\]
such that for constants $C,c>0$ which only depend on $n,s$,
\begin{align*}
\|A_0 f_{m,k,l}\|_{L^2(B_1)} \leq C e^{-c(m+k)}.
\end{align*}
\end{lem}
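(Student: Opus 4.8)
The plan is to exploit the rotational symmetry of the unit ball together with an explicit solution formula for the fractional exterior problem. Since $B_1$ is a ball and the exterior data is supported in the annulus $B_3\setminus\overline{B}_2$, I would first decompose $L^2(B_3\setminus\overline{B}_2)$ into spherical harmonic sectors: writing $f(x) = g(|x|)h_{m,l}(x/|x|)$ reduces everything to the behaviour in the radial variable, because the fractional Laplacian commutes with rotations and hence maps each sector to itself. For fixed degree $m$, the solution operator $A_0$ restricted to that sector is an integral operator in the radial variable with a smooth kernel (the Poisson-type kernel for $(-\Delta)^s$ on the ball applied to data supported away from the boundary), so $A_0$ acting on the sector is compact and, crucially, has a real-analytic / smooth kernel on the compact product of annulus $\times$ ball, with no singularity since $\overline{B}_1$ and $B_3\setminus\overline{B}_2$ are disjoint. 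This is the place where the "Poisson operator for $(-\Delta)^s$ is related to the Poisson operator for $\Delta$" identity advertised in the introduction should be used: the $s$-harmonic extension into $B_1$ of data supported in the far annulus is given by a convolution against the Poisson kernel $P_s(x,y)$ for the ball, and for $|y|\ge 2$, $|x|\le 1$ this kernel is smooth and one can extract from it the same exponential-in-frequency decay one has for the classical harmonic extension.

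Concretely, I would first handle the angular decay. For each fixed radius profile, the map sending a single spherical harmonic $h_{m,l}$ on a sphere of radius $\ge 2$ to its $s$-harmonic extension evaluated at radius $\le 1$ decays like a fixed negative power times $(1/2)^m$-type factors: expanding the Poisson kernel $P_s(x,y)$ in spherical harmonics yields radial coefficients that behave like $(|x|/|y|)^m$ up to lower-order corrections, so extending data at radius $\rho\ge 2$ and restricting to radius $t\le 1$ gains a factor $\lesssim (t/\rho)^m \le 2^{-m}$. That produces the $e^{-cm}$ part. Then, \emph{within} a fixed degree-$m$ sector, $A_0$ is a compact operator on $L^2([2,3])\to L^2([0,1])$ with a smooth (indeed real-analytic) kernel, so its singular values $\sigma_{m,1}\ge\sigma_{m,2}\ge\cdots$ decay faster than any polynomial — in fact, because the kernel extends holomorphically in a fixed complex neighbourhood independent of $m$, the singular values decay exponentially: $\sigma_{m,k}\le C e^{-ck}$ with $C,c$ uniform in $m$. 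I would then \emph{define} $g_{m,k}$ to be the right singular functions of this sectorial operator (times the appropriate radial weight $r^{n-1}$ to make $\{f_{m,k,l}\}$ orthonormal in $L^2(B_3\setminus\overline{B}_2)$ with the Euclidean measure), which is exactly the "suitable orthonormal basis" construction: by design $\|A_0 f_{m,k,l}\|_{L^2(B_1)} = \sigma_{m,k} \le C e^{-ck}$, and combining with the angular gain $e^{-cm}$ gives $\|A_0 f_{m,k,l}\|_{L^2(B_1)}\le C e^{-c(m+k)}$.

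The main obstacle is establishing the \emph{uniform-in-$m$ exponential} decay of the sectorial singular values $\sigma_{m,k}$, as opposed to merely super-polynomial decay. This requires quantitative control on the holomorphic extension of the radial Poisson kernel: one must show that the kernel $K_m(t,\rho)$ of the degree-$m$ piece of $A_0$, viewed as a function of $t\in[0,1]$, extends holomorphically to a complex disk of radius $R>1$ with a bound that is at worst polynomial in $m$ (any $e^{o(m)}$ growth is harmless once absorbed into a slightly smaller $c$), uniformly for $\rho\in[2,3]$. Given such an estimate, a standard argument — approximate $K_m(\cdot,\rho)$ in $L^2([0,1])$ by its Taylor polynomial of degree $k$, whose error is $O(\lambda^{-k})$ for some fixed $\lambda=R>1$ — shows the operator is within $Ce^{-ck}$ of a rank-$k$ operator, hence $\sigma_{m,k+1}\le Ce^{-ck}$ by the Courant–Fischer min–max characterization of singular values. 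The representation of $K_m$ via the explicit fractional Poisson kernel of the ball (for which closed forms are classical, e.g. in terms of a Gauss hypergeometric factor times $(\,|y|^2-1\,)^s(1-|x|^2)^{-s}|x-y|^{-n}$) makes the holomorphic extension manifest for $|x|\le 1 < 2 \le |y|$, so this step, while the technical heart, is tractable. Everything else — the orthonormal-basis bookkeeping, the passage from the sectorial statement to the global one, and the uniformity of $C,c$ in $m$ — is routine.
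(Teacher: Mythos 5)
Your route is genuinely different from the paper's. The paper does not take singular functions of the sectorial operator: it chooses the exterior data in the form $f_{m,k,l}(r\omega)=r(r^2-1)^s g_{m,k}(r)h_{m,l}(\omega)$, expands $(1-\tilde r^2/r^2)^{-1}$ as a geometric series in $\tilde r^2/r^2$, and then \emph{constructs} $g_{m,k}$ by Gram--Schmidt in the nested finite-dimensional spaces $V_k=\spa\{1,r^{-m},\dots,r^{-m-k+1}\}$ so that the first $\sim k/2$ moments $\int_2^3 g_{m,k}(r)r^{-m-2j}\,dr$ vanish. This kills the low-order terms of the geometric series and, since $r\geq 2$, the tail sums to $\lesssim 2^{-m-k}\tilde r^k$, giving the decay directly; completeness then comes from Stone--Weierstrass applied to $\cup_k V_k$. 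Your plan instead diagonalizes the sectorial radial operator via its SVD and derives exponential singular value decay from analyticity of its kernel. Both strategies work, and your identification of the key estimate --- a uniform-in-$m$ holomorphic bound for the degree-$m$ radial Poisson kernel --- is correct; in fact the relevant holomorphic factor $t^m\rho^{-m}(1-t^2/\rho^2)^{-1}$ is bounded on $\{|t|\leq 3/2,\ \rho\in[2,3]\}$ by $C(3/4)^m$, so the constant is even decaying, not merely polynomial, in $m$.

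Two points in your sketch need care. First, the full kernel carries the factor $(1-t^2)^s$ (and $t^{(n-1)/2}$ from the polar measure), which does \emph{not} extend holomorphically past $t=1$; the Taylor/rank-$k$ approximation must be applied to the holomorphic factor $t^m\rho^{-m}(1-t^2/\rho^2)^{-1}\rho^{-(n-1)/2}$ alone, and the bounded weight $(1-t^2)^s t^{(n-1)/2}$ multiplied back afterward --- rank and operator-norm error are both preserved under multiplication by a bounded function, so this is harmless, but as written your argument applies the Taylor expansion to $K_m$ itself, which has a branch point at $t=1$. Second, to obtain an orthonormal \emph{basis} of $L^2(B_3\setminus\overline B_2)$ you need the right singular functions of each sectorial operator to span $L^2([2,3])$ with the weighted measure, i.e.\ you must either check that the sectorial operator is injective (which follows from the same Taylor expansion: vanishing of $A_0^{(m)}g$ on $[0,1]$ forces all moments $\int_2^3 g(\rho)\rho^{-m-2j}\,d\rho$ to vanish, hence $g=0$ by a M\"untz/Stone--Weierstrass argument), or explicitly complete the singular system by an orthonormal basis of the kernel with singular value $0$. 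The paper sidesteps this by construction, since it manufactures the $g_{m,k}$ to exhaust $\cup_k V_k$, which is dense. Your approach is more structural and avoids the moment bookkeeping, at the cost of importing the analyticity/rank-approximation machinery and the additional injectivity/completeness check; the paper's is fully elementary and self-contained.
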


\begin{rmk}[Well-posedness with exterior $L^2(B_3 \setminus \overline{B}_2)$ data]
\label{rmk:well}
As already pointed out in Remark \ref{rmk:norms}, a natural set-up for the exterior problem \eqref{eq:Poisson} for the fractional Laplacian is to consider exterior Dirichlet data in $\widetilde{H}^{s}(B_3 \setminus \overline{B}_2)$ (see Lemma 2.3 in \cite{GSU16}). If, as in Lemma \ref{lem:orthog} and in various other places of the article, the exterior data are however localized onto a subset which is disjoint from the domain in which the fractional Schrödinger equation is considered, then it is also possible to deduce well-posedness of the fractional Schrödinger equation with less regular exterior data.

Let us explain this in the present context of data in $L^2(B_3 \setminus \overline{B}_2)$ with a fractional Schrödinger equation posed on $B_1$ (as in Lemma \ref{lem:orthog}). If $f \in L^2(B_3 \setminus \overline{B}_2)$, we may find $u \in H^s(\R^n)$ solving $((-\Delta)^s + q)u = 0$ in $B_1$ with $u|_{\R^n \setminus \overline{B}_1} = f$ by writing $u = E_0 f + v$, where $E_0$ denotes extension by zero and $v$ solves $((-\Delta)^s + q)v = F$ in $B_1$ with $v|_{\R^n \setminus \overline{B}_1} = 0$, and $F = -(-\Delta)^s (E_0 f)|_{B_1}$. But since $f$ is supported in $\overline{B}_3 \setminus B_2$, one has $F \in C^{\infty}(\overline{B}_1)$ by the pseudolocal property of Fourier multipliers, and one can find a solution $v \in H^s(\R^n)$ by standard well-posedness theory.

The previous argument shows well-posedness with exterior Dirichlet data $f$ in $L^2(B_3 \setminus \overline{B}_2)$, and also the estimate $\|u\|_{L^2(\R^n)} \leq \|f\|_{L^2(B_3 \setminus \overline{B}_2)} + C \|F\|_{H^{-s}(B_1)} \leq C \|f\|_{L^2(B_3 \setminus \overline{B}_2)}$ which again follows from the pseudolocal property.
\end{rmk}

\begin{proof}
The proof relies on the explicitly known Poisson formula for the fractional Laplacian in $B_1$ (see for instance \cite{B15}) and its close relation to the Poisson formula for the Laplacian.
In constructing the desired basis, we argue in two steps: First we deal with the spherical and then with the radial contributions of the basis functions.\\

\emph{Step 1: Poisson formula.}
By the explicit formula for the Poisson operator for the fractional Laplacian (see for instance \cite{B15}) we have that for $x \in B_1$ and some constant $c=c(n,s)>0$,  
\begin{align}
\label{eq:Poisson_form}
\begin{split}
\frac{u(x)}{c(1-|x|^2)^s} &= \int_{\R^n \setminus B_1} \frac{1}{|x-y|^n} \frac{f(y)}{(|y|^2-1)^s} \,dy = \int_1^{\infty} \int_{\partial B_1} \frac{r^{n-1}}{|x-r\omega|^n} \frac{f(r\omega)}{(r^2-1)^s} \,d\omega \,dr \\
 &= \int_1^{\infty} \int_{\partial B_1} \frac{1}{|x/r-\omega|^n} \frac{f(r\omega)}{r (r^2-1)^s} \,d\omega \,dr,
\end{split}
\end{align}
where we introduced polar coordinates $y=r\omega$ with $|\omega|=1$ in $\R^n \setminus B_1$.
Now we recall that the Poisson kernel for the standard Laplacian on the ball $B_1$ is given by $K(x,\omega) = \frac{c_n(1-|x|^2)}{|x-\omega|^n}$. We also assume that the exterior data in \eqref{eq:Poisson} have the form 
\[
f(r\omega) = r(r^2-1)^s g(r) h(\omega).
\]
Combining these observations with \eqref{eq:Poisson_form}, it follows that for some constant $\tilde{c}=\tilde{c}(n,s)>0$ 
\[
u(x) = \tilde{c} (1-|x|^2)^s \int_1^{\infty} \frac{g(r)}{1-|x|^2/r^2} \int_{\partial B_1} K(x/r, \omega) h(\omega) \,d \omega \,dr.
\]
If $h=h_{m,l}$ is a spherical harmonic of degree $m$, then $\int_{\partial B_1} K(x/r, \omega) h_{m,l}(\omega) \,d\omega = |x/r|^m h_{m,l}\left(\frac{x}{|x|}\right)$, since this is the corresponding solution of the Laplace equation. This implies that 
\begin{align}
\label{eq:u_explicit}
u(x) = \tilde{c} (1-|x|^2)^s |x|^m h_{m,l}(\omega) \int_1^{\infty} \frac{g(r)}{r^m(1-|x|^2/r^2)} \,dr.
\end{align}
The expression \eqref{eq:u_explicit} already displays the first decay properties of $u(x)$ in dependence of $m\in \N$, if we assume that $g$ is supported in $B_3 \setminus \overline{B}_2$ and is bounded. We next seek to specify $g$ in order to obtain the full decay properties, i.e.\ the decay in $m$ and $k$, where the latter will be a consequence of the choice of the radial functions $g(r)$.\\

\emph{Step 2: Construction of the radial component.}
We seek to specify the radial part of the functions $f$ in order to obtain an orthonormal basis $\{ f_{m,k,l} \}_{m,k=0}^{\infty}$ of $L^2(B_3 \setminus \overline{B}_2)$, where  
\[
f_{m,k,l}(r\omega) = r(r^2-1)^s g_{m,k}(r) h_{m,l}(\omega).
\]
Here $h_{m,l}$ are spherical harmonics of degree $m$, and the functions $g_{m,k} \in L^1([2,3])$ are to be determined. We aim for decay properties for the corresponding solutions $u_{m,k,l}$ in the form 
\begin{align}
\label{eq:decay_1}
\|u_{m,k,l}\|_{L^2(B_1)} \leq C(n,s) e^{-c(m+k)}.
\end{align}
Noting that
\begin{align}
\label{eq:decomp_u}
u_{m,k,l}(\tilde{r}\omega) = c_{n,s} (1-\tilde{r}^2)^s \tilde{r}^m h_{m,l}(\omega) F_{m,k}(\tilde{r})
\end{align}
and by invoking the orthogonality of the spherical harmonics, the decay \eqref{eq:decay_1} would follow from \eqref{eq:u_explicit}, if the function 
\[
F_{m,k}(\tilde{r}) := \int_2^{3} \frac{g_{m,k}(r)}{r^m(1-\tilde{r}^2/r^2)} \,dr
\]
satisfied $\|(1-\tilde{r}^2)^s \tilde{r}^m \tilde{r}^{(n-1)/2} F_{m,k}(\tilde{r})\|_{L^2([0,1])} \leq C(n,s)e^{-ck}$. We seek to deduce conditions which guarantee this.
To this end, we rewrite the expression for $F_{m,k}(\tilde{r})$ by spelling out the series representation of $(1-\tilde{r}^{2}/r^{2})^{-1}$:
\begin{align}
\label{eq:expand}
F_{m,k}(\tilde{r}) = \sum\limits_{j=0}^{\infty}  \tilde{r}^{2j} \int\limits_{2}^{3} \frac{g_{m,k}(r)}{r^{m+2j}} \,dr.
\end{align}
Here we used the absolute (even geometric) convergence of the sum (and the assumption that $g_{m,k}\in L^1([2,3])$) in order to exchange integration and summation.
In order to infer the desired order of vanishing, we will arrange that for $k\geq 1$ one has 
\begin{align}
\label{eq:moment_a}
\int\limits_{2}^{3} \frac{g_{m,k}(r)}{r^{m+2j}} \,dr = 0, \ 0 \leq j \leq k_0,
\end{align}
where
\begin{equation}
\label{eq:j}
\begin{split}
k_0= \frac{k-1}{2} \mbox{ (if $k$ is odd)},  \qquad
k_0 = \frac{k}{2}-1 \mbox{ (if $k$ is even)}.
\end{split}
\end{equation}
In addition, the desired orthonormality of the basis $f_{m,k,l}(r\omega)$ requires that
\begin{equation}
\label{eq:orth}
\int_2^{3} r^{n+1} (r^2-1)^{2s} g_{m,k}(r) g_{m,l}(r) \,dr = \delta_{kl}.
\end{equation}
We symmetrize the conditions \eqref{eq:moment_a} and \eqref{eq:orth} slightly, in order to deal with the same weighted scalar product in both equations.
Setting $\tilde{g}_{m,k}(r) := r^{n+1}(r^2-1)^{2s} g_{m,k}(r)$ then turns \eqref{eq:moment_a} and \eqref{eq:orth} into
\begin{equation}
\label{eq:moment_1}
\left\{ \begin{array}{c}
(\tilde{g}_{m,k}, r^{-(m+2j)})_{s} = 0 \text{ for $0 \leq j \leq k_0$ when $k \geq 1$}, \\[5pt]
(\tilde{g}_{m,k},\tilde{g}_{m,l})_{s} = \delta_{lk}
\end{array} \right.
\end{equation}
where $(f,g)_{s}:= \int\limits_{2}^{3} r^{-n-1} (r^2-1)^{-2s} f(r)g(r) \,dr$.

In order to satisfy the conditions in \eqref{eq:moment_1}, we inductively choose 
\begin{align*}
 &\tilde{g}_{m,0}(r) \in V_0 = \spa\{1\}, \\  &\tilde{g}_{m,k}(r)\in V_k:=\spa\{1,r^{-m}, r^{-m-1}, \dots, r^{-m-k+1}\} \mbox{ for } k\geq 1.
\end{align*}
To this end, we set $\tilde{g}_{m,0}(r)=c$ for some $c\in \R \setminus \{0\}$ (which is chosen such that $\tilde{g}_{m,0}$ satisfies the desired normalization $(\tilde{g}_{m,0}, \tilde{g}_{m,0})_s = 1$) and assume that for $k\geq 1$ the functions $\tilde{g}_{m,j} \in V_{j}$ are already defined for $j \leq k-1$. Then, the conditions in \eqref{eq:moment_1}, which have to be satisfied by $\tilde{g}_{m,k}$, can be formulated as
\begin{align}
\label{eq:req_1}
(\tilde{g}_{m,k},1)_s = 0, \ 
(\tilde{g}_{m,k},r^{-(m+j)})_s = 0 \mbox{ for all } 0 \leq j \leq k-2, \ (\tilde{g}_{m,k}, \tilde{g}_{m,k})_{s}=1.
\end{align}
By Gram-Schmidt orthonormalization in the finite dimensional Hilbert space $(V_k, (\cdot, \cdot)_s)$, we have that
\begin{align*}
V_k= V_{k-1} \oplus_{\perp,s} W_k,
\end{align*}
where $W_k$ is a one-dimensional vector space. Choosing $\tilde{g}_{m,k}\in W_k$ with $\|\tilde{g}_{m,k}\|_s = 1$ (up to a choice of a sign this is unique) implies the conditions from \eqref{eq:req_1}. Moreover, we note that
\begin{align*}
\left| \int\limits_{2}^3 \frac{g_{m,k}(r)}{r^{m+2j}} \,dr  \right|
&= 
\left| \int\limits_{2}^3 r^{-(n+1)}(r^2-1)^{-2s} \frac{\tilde{g}_{m,k}(r)}{r^{m+2j}} \,dr  \right|\\
&\leq \|\tilde{g}_{m,k}\|_{L^2_s([2,3])} \|r^{-\frac{n+1}{2}-m-2j} (r^2-1)^{-s}\|_{L^2([2,3])}
\leq 2^{-n/2-m-2j}.
\end{align*}
As a consequence of this, of equation \eqref{eq:expand} and of \eqref{eq:moment_a}, we obtain that
\begin{align}
\label{eq:Linfty}
|F_{m,k}(\tilde{r})| \leq 2^{-m-k}\tilde{r}^k,
\end{align}
which entails the bound
\begin{align} \label{fmk_estimate}
\|(1-\tilde{r}^2)^s \tilde{r}^m \tilde{r}^{(n-1)/2} F_{m,k}(\tilde{r})\|_{L^2([0,1])}  \leq  2^{-m-k}.
\end{align}
Together with \eqref{eq:decomp_u} this implies 
\begin{equation} \label{umkl_ltwo_estimate}
\begin{split}
\|A_0 f_{m,k,l}\|_{L^2(B_1)}&:=\|u_{m,k,l}\|_{L^2(B_1)} \\
& \leq c_{n,s}\|h_m\|_{L^2(S^{n-1})} \|(1-\tilde{r}^2)^s \tilde{r}^m \tilde{r}^{(n-1)/2} F_{m,k}(\tilde{r})\|_{L^2([0,1])} 
\leq c_{n,s} 2^{-m-k}.
\end{split}
\end{equation}
Finally, we note that for any $m\in \N$ the linear combinations of the functions $\tilde{g}_{m,k}$ generate all functions in the vector spaces $V_k$. Moreover, the set $A = \cup_{k \geq 0} V_k$ is a subalgebra of $C([2,3])$ that contains a nonzero constant function and separates points. Hence, by virtue of the Stone-Weierstrass theorem, for any $m\in \N$ finite linear combinations of the functions $\{\tilde{g}_{m,k}\}_{k\in\N}$ are dense in $C([2,3])$ and thus in particular also in $L^2_s([2,3]):=(L^2([2,3]),(\cdot,\cdot)_s)$. It follows that the functions $\{ f_{m,k,l} \}$ defined above form a dense orthonormal set in $L^2(B_3 \setminus \overline{B}_2)$. This concludes the construction of the desired basis.
\end{proof}

\begin{rmk}
We remark that the $L^{\infty}$ bound \eqref{eq:Linfty} for $F_{m,k}(r)$ could have been upgraded to an $L^{\infty}$ bound for $u_{m,k,l}$. Indeed, combining \eqref{eq:Linfty} with $L^{\infty}$ bounds for the spherical harmonics (see for instance \cite{Sogge17}) would have implied $L^{\infty}$ bounds for $u_{m,k,l}$ of the form
\begin{align*}
|u_{m,k,l}(x)| \leq C(n,s) 2^{-m-k} |x|^{m+k} |h_{m,l}(\omega)| \leq C(n,s) m^{\frac{n-2}{2}} 2^{-m-k} |x|^{m+k}.
\end{align*}
As our argument does not require these strengthened bounds, we do not pursue this further.
\end{rmk}

With the basis functions from Lemma \ref{lem:orthog} at hand, we approach our central smoothing estimate. To this end, 
we study the operator 
\begin{equation}
\label{eq:Gamma}
\Gamma(q):= \Lambda_q-\Lambda_0,
\end{equation}
and show that it has strong regularizing properties. As in \cite{M01} it is mainly in this estimate that the properties of our operator come into play. In the proof of Theorem \ref{thm:main} this smoothing estimate will be of central relevance, as it shows that the operator $q\mapsto\Gamma(q)$ ``compresses" distances strongly (c.f.\ Lemma \ref{lem:smoothing}).

\begin{prop}
\label{prop:smooth}
Let $n\geq 1$, $B_1 \subset \R^n$ and let $q \in L^{\infty}(B_1)$. Assume further that $0$ is not an eigenvalue of the operator $(-\D)^s +q$ in $B_1$ (with exterior Dirichlet data). Suppose that $f_{m,k,l}$ are as in Lemma \ref{lem:orthog}. Then there exists a universal constant $C = C_{n,s} >1$ such that for all $(m_i,k_i,l_i)$ with $m_i, k_i \in \N$, $0 \leq l_i \leq l_{m_i}$, and $i\in\{1,2\}$, one has 
\begin{align*}
(\Gamma(q)f_{m_1,k_1,l_1}, f_{m_2,k_2,l_2})_{L^2(B_3 \setminus \overline{B}_2)} 
&\leq C e^{-c\max\{m_1+k_1,m_2+k_2\}}\times\\
&\quad \times \|q\|_{L^{\infty}(B_1)}\|((-\D)^s+q)^{-1}\|_{L^2(B_1)\rightarrow L^2(B_1)}.
\end{align*}
\end{prop}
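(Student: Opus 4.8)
The plan is to derive an integral identity for $(\Gamma(q) f_{m_1,k_1,l_1}, f_{m_2,k_2,l_2})_{L^2(B_3 \setminus \overline{B}_2)}$ that exhibits the solutions $u_{m_i,k_i,l_i}$ of the unperturbed fractional problem \eqref{eq:Poisson} as test functions, and then to insert the exponential decay bound \eqref{umkl_ltwo_estimate} from Lemma \ref{lem:orthog}. Concretely, if $u_j$ denotes the solution of $((-\D)^s + q)u_j = 0$ in $B_1$ with $u_j|_{\R^n \setminus \overline{B}_1} = f_{m_j,k_j,l_j}$ and $u_j^0$ the solution with $q = 0$ and the same exterior data, then $w_j := u_j - u_j^0$ solves $((-\D)^s + q) w_j = -q u_j^0$ in $B_1$ with $w_j|_{\R^n \setminus \overline{B}_1} = 0$, hence $w_j = -((-\D)^s+q)^{-1}(q u_j^0|_{B_1})$ in the weak sense (using the hypothesis that $0$ is not an eigenvalue). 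The standard bilinear-form computation for the DN-type map then gives an Alessandrini-type identity of the schematic form
\begin{align*}
(\Gamma(q) f_{m_1,k_1,l_1}, f_{m_2,k_2,l_2})_{L^2(B_3 \setminus \overline{B}_2)} = - \int_{B_1} q\, u_1^0\, u_2 \,dx,
\end{align*}
or a symmetrized variant thereof; this is the fractional analogue of the identity used by Mandache, and it is available from the well-posedness discussion referenced in Remark \ref{rmk:well} together with the integration-by-parts formula for $(-\D)^s$. I would first carefully justify this identity, paying attention to which solution operator inverse appears and to the fact that the $L^2(B_3\setminus\overline B_2)$ pairing replaces the $\widetilde H^s$–$H^{-s}$ pairing (again licensed by the disjointness of $B_1$ and $B_3\setminus\overline B_2$ and the pseudolocal smoothing from Remark \ref{rmk:well}).

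From that identity the estimate is essentially immediate: bound $|\int_{B_1} q\, u_1^0\, u_2\,dx| \le \|q\|_{L^\infty(B_1)} \|u_1^0\|_{L^2(B_1)} \|u_2\|_{L^2(B_1)}$, control $\|u_2\|_{L^2(B_1)} \le \|u_2^0\|_{L^2(B_1)} + \|w_2\|_{L^2(B_1)}$ with $\|w_2\|_{L^2(B_1)} \le \|((-\D)^s+q)^{-1}\|_{L^2(B_1)\to L^2(B_1)} \|q\|_{L^\infty(B_1)}\|u_2^0\|_{L^2(B_1)}$, and then invoke \eqref{umkl_ltwo_estimate} to get $\|u_j^0\|_{L^2(B_1)} = \|A_0 f_{m_j,k_j,l_j}\|_{L^2(B_1)} \le c_{n,s} 2^{-m_j-k_j}$. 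Since $2^{-m_1-k_1} 2^{-m_2-k_2} \le 2^{-\max\{m_1+k_1,\,m_2+k_2\}} \le e^{-c\max\{m_1+k_1,\,m_2+k_2\}}$ for a suitable $c = c(n,s)>0$, and since the resolvent norm factor can be absorbed so that only one power of it survives in the final bound (the "$1$" coming from the $\|u_2^0\|$ term and the resolvent power coming from the $\|w_2\|$ term, both multiplied by one power of $\|q\|_{L^\infty}$ — here one uses $\|((-\D)^s+q)^{-1}\|_{L^2\to L^2} \ge$ const or simply states the bound for small $\|q\|$), one collects everything into the claimed constant $C = C_{n,s}$. A clean way to phrase the last step is to note $\|u_2\|_{L^2(B_1)} \le C_{n,s} 2^{-m_2-k_2}(1 + \|q\|_{L^\infty(B_1)}\|((-\D)^s+q)^{-1}\|_{L^2(B_1)\to L^2(B_1)})$ and then use that the regime of interest has $\|q\|_{L^\infty}$ bounded, so the "$1$" is dominated by the resolvent term up to a constant; alternatively one keeps both terms and the stated inequality follows verbatim.

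The main obstacle I anticipate is not the estimate itself but the rigorous derivation of the bilinear identity in the low-regularity setting: one must make sense of $\Lambda_q f$ as an element of $L^2(B_3\setminus\overline B_2)$ for merely $L^2$ exterior data, show that the weak formulation $B_q(u,\phi) = 0$ for all $\phi \in \widetilde H^s(B_1)$ extends to test functions that are themselves solutions of the exterior problem (not compactly supported in $B_1$), and handle the subtraction $u - u^0$ so that the boundary/exterior contributions telescope into the stated pairing. This is where the pseudolocal smoothing of $(-\D)^s(E_0 f)|_{B_1} \in C^\infty(\overline B_1)$ from Remark \ref{rmk:well} is essential, as it guarantees $w_j \in H^s(\R^n)$ with the necessary quantitative bound and makes the formal integration by parts legitimate. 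Once the identity is secured, the rest is the short chain of Cauchy–Schwarz estimates combined with Lemma \ref{lem:orthog}.
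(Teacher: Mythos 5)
Your proposal is correct in spirit but takes a genuinely different route from the paper, and the route you chose is the one that introduces the technical obstacle you yourself flag. The paper avoids the Alessandrini-type identity entirely. Instead, writing $v := u - u_0$ for the difference of the perturbed and unperturbed solutions with exterior data $f_{m_1,k_1,l_1}$, it first bounds $\|v\|_{L^2(B_1)}$ exactly as you do (the resolvent estimate on \eqref{eq:difference}), but then bounds the exterior value $\Gamma(q)f_{m_1,k_1,l_1} = (-\Delta)^s v|_{B_3 \setminus \overline{B}_2}$ directly via the kernel: since $v$ is supported in $\overline{B}_1$ and $x \in B_3 \setminus \overline{B}_2$ is at distance $\geq 1$ from $B_1$,
\begin{align*}
|(-\Delta)^s v(x)| = c_{n,s}\left|\int_{B_1}\frac{v(x)-v(y)}{|x-y|^{n+2s}}\,dy\right| \leq c_{n,s}\int_{B_1}|v(y)|\,dy \leq c_{n,s}\|v\|_{L^2(B_1)},
\end{align*}
and then simply pairs with $f_{m_2,k_2,l_2}$ using $\|f_{m_2,k_2,l_2}\|_{L^2(B_3\setminus\overline{B}_2)}=1$. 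This yields the bound with decay $e^{-c(m_1+k_1)}$, and the $\max\{m_1+k_1,m_2+k_2\}$ is recovered by invoking the symmetry of the Dirichlet-to-Neumann map. This sidesteps precisely the low-regularity integration-by-parts issues you anticipate, at the cost of needing DN-map symmetry, which your approach does not need because your integral identity is naturally symmetric in the two indices.

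There is also a small but genuine discrepancy in your final bound. Your chain of inequalities produces the factor $\|q\|_{L^\infty}\bigl(1 + \|q\|_{L^\infty}\|((-\Delta)^s+q)^{-1}\|\bigr)$ rather than $\|q\|_{L^\infty}\|((-\Delta)^s+q)^{-1}\|$ as stated. You acknowledge this, but the two are not equivalent up to a universal $C_{n,s}$: the resolvent norm can be small when $\|q\|_{L^\infty}$ is large, so the ``$1$'' term cannot be absorbed without an extra constraint (e.g.\ a uniform bound on $\|q\|_{L^\infty}$). The paper's estimate does not produce this extra term because it never expands $\|u_2\|$ as $\|u_2^0\| + \|w_2\|$; it only needs $\|v\|_{L^2(B_1)}$, which carries exactly one power of the resolvent. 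Since the proposition as stated requires a universal $C_{n,s}>1$ independent of $q$, your proof as written proves a slightly weaker statement. In the context where the proposition is applied (Lemma \ref{lem:smoothing}, where $\|q\|_{L^\infty}\leq \lambda_{1,s}/2$) this makes no difference, but if you wish to prove the proposition as stated, either adopt the paper's direct kernel route, or restate the conclusion with the factor $\bigl(1 + \|q\|_{L^\infty}\|((-\Delta)^s+q)^{-1}\|\bigr)$.
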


\begin{rmk}
\label{rmk:well1}
To show that $\Gamma(q)$ indeed acts on $L^2(B_3 \setminus \overline{B}_2)$, it follows from Remark \ref{rmk:well} that 
\[
\Lambda_q: L^2(B_3 \setminus \overline{B}_2) \to H^{-2s}(B_3 \setminus \overline{B}_2), \ \ f \mapsto (-\Delta)^s u|_{B_3 \setminus \overline{B}_2}
\]
is a bounded operator. The fact that $\Gamma(q) = \Lambda_q - \Lambda_0$ is bounded $L^2(B_3 \setminus \overline{B}_2) \to L^2(B_3 \setminus \overline{B}_2)$ follows since $\Gamma(q) f = (-\Delta)^s v|_{B_3 \setminus \overline{B}_2}$ where $v$ solves \eqref{eq:difference} below, and thus 
\[
\|(-\Delta)^s v\|_{L^2(B_3 \setminus \overline{B}_2)} \leq C \| v \|_{H^s(\R^n)} \leq C \| q u_0 \|_{L^2(B_1)} \leq C \|f\|_{L^2(B_3 \setminus \overline{B}_2)}
\]
by pseudolocal estimates using that $v$ is supported in $\overline{B}_1$, and by Remark \ref{rmk:well}.
\end{rmk}

\begin{rmk}
\label{rmk:support}
We stress that in contrast to the situation of the classical Calder\'on problem, in the fractional Calder\'on problem we do \emph{not} have to assume that $q$ is supported in the interior of $B_1$. This is due to the fact that in the fractional setting the smoothing property is a direct consequence of the disjointness of the two sets $B_3 \setminus \overline{B}_2$, $B_1$ (cf.\ the arguments in the proof of Lemma \ref{lem:orthog}), while in the classical case, it was necessary to use the smoothing properties of the Laplacian in order to derive regularity and decay.
\end{rmk}

\begin{proof}
Let $(m_i,k_i,l_i)$ be triplets such that $m_i, k_i \in \N$, $0 \leq l_i \leq l_{m_i}$, and $i\in\{1,2\}$. We define $u_0:= A_0 f_{m_1,k_1,l_1}$ and set $u:=A_q f_{m_1,k_1,l_1}$. Here $A_q:= r_{B_1}P_q$, where $P_q$ denotes the Poisson operator in the presence of the potential $q$, i.e., for $f\in L^2(B_3 \setminus \overline{B}_2)$, 
we set $P_q f = u$ with
\begin{align*}
(-\D)^s u + q u &= 0 \mbox{ in } B_1,\\
u &= f \mbox{ in } \R^n \setminus \overline{B}_1.
\end{align*}
Then the function $v:= u - u_0$ is a solution to
\begin{equation}
\label{eq:difference}
\begin{split}
((-\D)^s + q)v &= - q u_0  \mbox{ in } B_1 ,\\
v & = 0 \mbox{ in } \R^n \setminus \overline{B}_1.
\end{split}
\end{equation}
As a consequence,
\begin{equation}
\label{eq:smooth_a}
\begin{split}
\|v\|_{L^2(\R^n)} 
&\leq \|v\|_{L^2(B_1)} 
\leq \|((-\D)^s + q)^{-1} (q u_0)\|_{L^2(B_1)}\\
&\leq \|((-\D)^s + q)^{-1}\|_{L^2(B_1) \to L^2(B_1)} \|q\|_{L^{\infty}(B_1)} \|u_0\|_{L^2(B_1)}\\
&\leq C e^{-c(m_1+k_1)}\|((-\D)^s + q)^{-1}\|_{L^2(B_1) \to L^2(B_1)} \|q\|_{L^{\infty}(B_1)} ,
\end{split}
\end{equation}
where we used Lemma \ref{lem:orthog} to estimate $\|u_0\|_{L^2(B_1)}$.
Here the inverse operator $((-\D)^s + q)^{-1} $ is understood as the solution operator to \eqref{eq:difference} (i.e.\ to the problem with homogeneous Dirichlet data, for which boundedness properties are for instance discussed in \cite{G15}).
We next seek to estimate $(-\D)^s v|_{B_3 \setminus \overline{B}_2}$ by $\|v\|_{L^2(B_1)}$. Indeed, this follows from the disjointness of the sets $B_3 \setminus \overline{B}_2$, $B_1$: For $x\in B_3\setminus \overline{B}_2$ 
\begin{align*}
|(-\D)^s v (x)| 
&= c_{n,s}\left| \ \int\limits_{\R^n} \frac{v(x)-v(y)}{|x-y|^{n+2s}} \,dy \,\right|
= c_{n,s}\left| \ \int\limits_{B_1} \frac{v(x)-v(y)}{|x-y|^{n+2s}} \,dy \,\right|\\
&\leq c_{n,s}\int\limits_{B_1} |v(y)| \,dy
 \leq  c_{n,s}\| v \|_{L^2(B_1)}\\
&\leq  c_{n,s} e^{-c(m_1+k_1)}\|((-\D)^s + q)^{-1}\|_{L^2(B_1) \to L^2(B_1)} \|q\|_{L^{\infty}(B_1)}.
\end{align*}
Here we used that $v(x)=0$ if $x\in B_3 \setminus \overline{B}_2$ and invoked \eqref{eq:smooth_a}.
Thus,
\begin{align*}
\|(-\D)^s v\|_{L^2(B_3 \setminus \overline{B}_2)} \leq  c_{n,s} e^{-c(m_1+k_1)}\|((-\D)^s + q)^{-1}\|_{L^2(B_1) \to L^2(B_1)} \|q\|_{L^{\infty}(B_1)}.
\end{align*}
Recalling that $(-\D)^s v|_{B_3 \setminus \overline{B}_2}= \Gamma(q)f_{m_1,k_1,l_1}$ shows the statement for $m_1+k_1\geq m_2+k_2$. Using the symmetry of the Dirichlet-to-Neumann map, which hence also yields
\begin{align*}
 (\Gamma(q)f_{m_1,k_1,l_1}, f_{m_2,k_2,l_2})_{L^2(B_3 \setminus \overline{B}_2)}
=  (f_{m_1,k_1,l_1}, \Gamma(q) f_{m_2,k_2,l_2})_{L^2(B_3 \setminus \overline{B}_2)},
\end{align*} 
finally implies the result in the case that $m_1+k_1< m_2+k_2$.
\end{proof}

In the sequel, abbreviating $\mat:= (\Gamma(q)f_{m_1,k_1,l_1}, f_{m_2,k_2,l_2})_{L^{2}(B_3 \setminus \overline{B}_2)}$ and using that
\begin{align*}
\|\Gamma(q)\|_{L^2(B_3 \setminus \overline{B}_2)\rightarrow L^2(B_3 \setminus \overline{B}_2)}
&= \sup\limits_{\|f\|_{L^2(B_3 \setminus \overline{B}_2)}=1 = \|h\|_{L^2(B_3 \setminus \overline{B}_2)}}(\Gamma(q)f,h)_{L^2(B_3 \setminus \overline{B}_2)}\\
&= \sup_{ \beta, \tilde{\beta} \in \ell^2, \|\tilde{\beta}\|_{\ell^2}= \|\beta\|_{\ell^2}=1} \sum\limits_{m_i,k_i,l_i}\tilde{\beta}_{m_1,k_1,l_1} \beta_{m_2,k_2,l_2} \mat\\
&\leq \|\mat\|_{HS},
\end{align*}
where $\|\,\cdot\,\|_{HS}$ denotes the Hilbert-Schmidt norm, we identify the operator $\Gamma(q)$ with its matrix representation given by $\{\mat\}$. We note that
\begin{align*}
\|\Gamma(q)\|_{L^2(B_3 \setminus \overline{B}_2) \rightarrow L^2(B_3 \setminus \overline{B}_2)}^2 
&\leq \|\mat\|_{HS}^2
\leq \sum\limits_{m_i,k_i,l_i} |\mat|^2\\
& \leq \sup\limits_{m_i,k_i,l_i}(1+\max\{m_1+k_1,m_2+k_2\})^{2\q}|\mat|^2 \times \\
& \quad \times \sum\limits_{m_i,k_i,l_i} (1+\max\{m_1+k_1,m_2+k_2\})^{-2\q}\\
&\leq 16 \sup\limits_{m_i,k_i,l_i}(1+\max\{m_1+k_1,m_2+k_2\})^{2\q}|\mat|^2.
\end{align*}
Here we used that the number $N_p$ of tuples $(m_1,k_1,l_1,m_2,k_2,l_2)$ with $\max\{m_1+k_1,m_2+k_2\} = p$ is bounded by $8 (p+1)^{2n+1}$ (to see this, note that there are $p+1$ pairs $(m,k)$ with $m+k=p$, there are $\sum\limits_{j=0}^{p+1}(j+1) \leq (p+1)^2$ pairs $(m,k)$ with $m+k\leq p$, and the dimension of spherical harmonics of degree $\leq p$ is $\leq \sum_{m=0}^p 2(m+1)^{n-2} \leq 2(p+1)^{n-1}$). As a consequence,
\begin{align*}
\sum\limits_{m_i,k_i,l_i} (1+\max\{m_1+k_1,m_2+k_2\})^{-2\q}\leq   \sum\limits_{p=0}^{\infty} (1+p)^{-2\q} N_p\leq 16.
\end{align*}

Based on Proposition \ref{prop:smooth} and the identification of $\Gamma(q)$ with its matrix representation $\{\mat\}$, we define the following function space:

\begin{defi}
\label{defi:matrix}
We set
\begin{align*}
X&:=\left\{\{\mat\}_{m_i,k_i,l_i}: \right.\\
& \left. \quad \|\mat\|_{X}:=  \sup\limits_{m_i,k_i,l_i}(1+\max\{m_1+k_1,m_2+k_2\})^{\ql} |\mat| <\infty\right\}.
\end{align*}
\end{defi}

In the sequel, we will mainly work in this space. We remark that it defines a Banach space. Note that the above computation also yields the estimate 
\begin{equation} \label{eq:gammaq_estimate}
\|T\|_{L^2(B_3 \setminus \overline{B}_2) \rightarrow L^2(B_3 \setminus \overline{B}_2)} \leq 4 \| T \|_{X}
\end{equation}
whenever $T: L^2(B_3 \setminus \overline{B}_2) \rightarrow L^2(B_3 \setminus \overline{B}_2)$ is bounded, and the norm on the right is the $X$-norm for the matrix representation of $T$.

\begin{rmk}
\label{rmk:smooth}
As in \cite{M01} it would have been possible to consider smoother variants of the space $X$, e.g. for instance we could study the mapping properties of $\Gamma(q)$ in
\begin{align*}
X^{\gamma}
&:=\left\{\{\mat\}_{m_i,k_i,l_i}: \right.\\
& \left. \quad \|\mat\|_{X}:=  \sup\limits_{m_i,k_i,l_i}(1+\max\{m_1+k_1,m_2+k_2\})^{\ql + \gamma} |\mat| <\infty\right\}.
\end{align*}
This would correspond to investigating the Sobolev regularity of $\Gamma(q)$. As this however does not yield major new insights, we do not pursue this here.
\end{rmk}

\section{Two Metric Spaces}
\label{sec:metric}

The remainder of our argument for Theorem \ref{thm:main} and Corollary \ref{cor:main} is based on the ideas, which were introduced by Mandache \cite{M01}. Since these rely on an interplay of Proposition \ref{prop:smooth}, which is a specific property of our problem at hand, and general properties of the underlying Banach spaces we nevertheless present (most of) the arguments for completeness.

As in \cite{M01}, roughly speaking, we seek to exploit two complementary separation properties of the spaces $L^{\infty}$ and $\Gamma(L^{\infty})$. While functions in $L^{\infty}$ are ``far apart" from each other, the strong smoothing effect of $\Gamma(q)$ compresses these distances. We formalize this observation by considering $\epsilon$-discrete sets and $\delta$-nets in the corresponding function spaces. A counting argument and the pigeonhole principle then allow us to conclude the argument as in Mandache's original proof.

\begin{defi}
\label{defi:Banach}
Let $(X,d)$ be a metric space and $\epsilon>0$. A set $Y\subset X$ is an \emph{$\epsilon$-net for a set $X_1 \subset X$}, if for every $x\in X_1$ there exists an element $y\in Y$ such that $d(x,y)\leq \epsilon$. A set $Z\subset X$ is said to be \emph{$\epsilon$-discrete}, if for all $z_1,z_2 \in Z$ with $z_1 \neq z_2$ we have that $d(z_1,z_2)\geq \epsilon$.
\end{defi}

With these notions at hand, we show that the smoothing bounds from Lemma \ref{lem:smoothing} imply that the image of a suitable unit ball in $L^{\infty}$ under $\Gamma$ is ``strongly compressed":

\begin{lem}
\label{lem:smoothing}
Let $n\geq 1$, $B_1 \subset \R^n$ and $s\in(0,1)$.
Let $B^{\infty}_{r_0}$ denote the ball in $L^{\infty}(B_1)$ with radius $r_0=\lambda_{1,s}/2$, where $\lambda_{1,s}$ denotes the first Dirichlet eigenvalue of $(-\D)^s$ on $B_1$. Then, as a function of $q$, one has 
\begin{align*}
\Gamma: B^{\infty}_{r_0} \rightarrow X, \ q \mapsto \Gamma(q),
\end{align*}
where $X$ denotes the space from Definition \ref{defi:matrix}.
Moreover, there exists $C = C(n,s) >1$ such that for any $\delta \in (0,e^{-1})$ there is a $\delta$-net $Y$ for $\Gamma(B_{r_0}^{\infty})$ with
\begin{align*}
|Y| \leq e^{C|\log(\delta)|^{\qc}}.
\end{align*}
\end{lem}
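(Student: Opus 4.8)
The plan is to deduce both assertions of Lemma~\ref{lem:smoothing} from Proposition~\ref{prop:smooth} by combining the pointwise matrix bound with a crude entropy/covering count. First I would check that $\Gamma$ actually maps $B^{\infty}_{r_0}$ into $X$. For $q \in B^{\infty}_{r_0}$ the assumption $\|q\|_{L^\infty(B_1)} \le r_0 = \lambda_{1,s}/2$ guarantees, via the variational characterization of the first Dirichlet eigenvalue $\lambda_{1,s}$ of $(-\Delta)^s$ on $B_1$, that $0$ is not an eigenvalue of $(-\Delta)^s + q$ and moreover that $\|((-\Delta)^s+q)^{-1}\|_{L^2(B_1)\to L^2(B_1)} \le (\lambda_{1,s} - \|q\|_{L^\infty})^{-1} \le 2/\lambda_{1,s}$. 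Plugging this and $\|q\|_{L^\infty} \le \lambda_{1,s}/2$ into Proposition~\ref{prop:smooth} gives
\begin{equation*}
|a_{m_1,k_1,l_1}^{m_2,k_2,l_2}| \le C_{n,s}\, e^{-c\max\{m_1+k_1,\,m_2+k_2\}},
\end{equation*}
and since $(1+p)^{\ql} e^{-cp}$ is bounded uniformly in $p \ge 0$, this shows $\|\Gamma(q)\|_X \le C(n,s)$ for all $q \in B^{\infty}_{r_0}$; in particular $\Gamma(B^{\infty}_{r_0})$ lies in a fixed ball of $X$.

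Next I would build the $\delta$-net by discretizing the relevant matrix entries. Fix $\delta \in (0,e^{-1})$. The exponential decay above means that the ``tail'' entries are already smaller than $\delta$: there is a threshold $P = P(\delta) \le C|\log\delta|$ (determined by $C_{n,s} e^{-cP} \le \delta/100$, say, after accounting for the polynomial weight) such that for any $q$, truncating the matrix $\{a_{m_1,k_1,l_1}^{m_2,k_2,l_2}\}$ to the tuples with $\max\{m_1+k_1,m_2+k_2\} \le P$ changes it by at most $\delta/2$ in the $X$-norm — here one uses exactly the same estimate as in the definition of the $X$-norm, since the $X$-norm of the tail is $\sup_{p>P}(1+p)^{\ql}|a|\le \sup_{p>P}(1+p)^{\ql}C_{n,s}e^{-cp}$. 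On the finitely many remaining entries, there are at most $N := \sum_{p\le P} N_p \le C P^{2n+2} \le C|\log\delta|^{2n+2}$ of them (using the count $N_p \le 8(p+1)^{2n+1}$ from the excerpt). Each such entry lies in $[-C_{n,s},C_{n,s}]$; I would place a uniform grid of spacing $\delta/(2(1+P)^{\ql})$ on each, which requires at most $M := C_{n,s}(1+P)^{\ql}/\delta \le C|\log\delta|^{\ql}/\delta$ grid points per entry. Rounding each truncated entry to the nearest grid point changes the $X$-norm by at most $\delta/2$, so every $\Gamma(q)$ is within $\delta$ of one of the resulting discretized matrices. Taking $Y$ to be this set of discretized matrices (discarding those not within $\delta$ of the image) gives a $\delta$-net with
\begin{equation*}
|Y| \le M^{N} \le \big(C|\log\delta|^{\ql}/\delta\big)^{C|\log\delta|^{2n+2}}.
\end{equation*}

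Finally I would take logarithms to put this in the stated form. We have $\log|Y| \le C|\log\delta|^{2n+2}\big(\log(1/\delta) + (\ql)\log|\log\delta| + \log C\big)$. Since $\delta \in (0,e^{-1})$, all three terms inside the parenthesis are $\le C\log(1/\delta) = C|\log\delta|$, so $\log|Y| \le C|\log\delta|^{2n+3} = C|\log\delta|^{\qc}$, which is exactly the claimed bound $|Y| \le e^{C|\log\delta|^{\qc}}$. The main obstacle — and the only place real care is needed — is bookkeeping the interplay between the polynomial weight $(1+p)^{\ql}$ in the $X$-norm and the exponential decay: one must choose the truncation level $P \asymp |\log\delta|$ and the grid spacing $\asymp \delta/P^{\ql}$ so that both the tail and the rounding error are controlled in $X$-norm simultaneously, while keeping the total entry count polynomial in $|\log\delta|$ so that the final exponent comes out to $\qc = 2n+3$ rather than something larger. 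Everything else is the routine pigeonhole/counting packaging from \cite{M01} and \cite{DR03}.
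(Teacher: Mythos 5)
Your proposal is correct and follows essentially the same route as the paper: establish the pointwise matrix bound $|\mat|\le C_{n,s}e^{-c\max\{m_1+k_1,m_2+k_2\}}$ via Proposition~\ref{prop:smooth} and the coercivity bound $\|((-\D)^s+q)^{-1}\|\le 2/\lambda_{1,s}$, then truncate at a threshold $P\asymp|\log\delta|$ and discretize the finitely many surviving entries on a grid of spacing $\asymp\delta/(1+P)^{\ql}$, exactly as the paper does with $l_\delta$ and $\delta'=(1+l_\delta)^{-\ql}\delta$. The bookkeeping (tail in $X$-norm, rounding error in $X$-norm, $N\lesssim|\log\delta|^{2n+2}$ entries, $M\lesssim|\log\delta|^{\ql}/\delta$ grid values, final exponent $\qc$) matches the paper's computation.
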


\begin{rmk}
\label{rmk:radius}
We remark that the radius $r_0>0$ is chosen in such a way that for all $q \in B_{r_0}^{\infty}$ we can ensure that $0$ is not an eigenvalue of the (homogeneous) Dirichlet problem for the operator $(-\D)^s + q$.
\end{rmk}

\begin{proof}
In order to infer the mapping properties of the lemma, we invoke the estimate from Proposition \ref{prop:smooth}. As above, we identify $\Gamma(q)$ with the matrix $\{\mat\}_{m_i,k_i,l_i}$. For $q \in B^{\infty}_{r_0}$ Proposition \ref{prop:smooth} yields
\begin{align*}
|\mat|
&\leq C_{n,s} e^{-c\max\{m_1+k_1,m_2+k_2\}} \|q\|_{L^{\infty}(B_1)} \|((-\D)^s + q)^{-1}\|_{L^2(B_1) \to L^2(B_1)} \\
&\leq C_{n,s} e^{-c\max\{m_1+k_1,m_2+k_2\}} \frac{\lambda_{1,s}}{2} \|((-\D)^s + q)^{-1}\|_{L^2(B_1) \to L^2(B_1)}.
\end{align*}
It therefore suffices to estimate $\|((-\D)^s + q)^{-1}\|_{L^2(B_1) \to L^2(B_1)}$ for $q \in B_{r_0}^{\infty}$.
By our choice of $r_0$, we infer that
\[
( ((-\Delta)^s + q) v, v)_{L^2(B_1)} \geq \lambda_{1,s} \|v\|_{L^2(B_1)}^2 - r_0 \|v\|_{L^2(B_1)}^2 \geq  \frac{\lambda_{1,s}}{2} \|v\|_{L^2(B_1)}^2, \qquad v \in \widetilde{H}^s(B_1),
\]
and consequently
\begin{align*}
\|((-\D)^s + q)^{-1}\|_{L^2(B_1) \to L^2(B_1)} \leq \frac{2}{\lambda_{1,s}}.
\end{align*}
Hence,
\begin{equation} \label{eq:mat_estimate}
|\mat| \leq C_{n,s} e^{-c\max\{m_1+k_1,m_2+k_2\}}.
\end{equation}
and 
\begin{align*}
\|\mat\|_{X} 
&\leq \sup\limits_{m_i,k_i,l_i}(1+\max\{m_1+k_1,m_2+k_2\})^{\ql} C_{n,s} e^{-c\max\{m_1+k_1,m_2+k_2\}}\\
&\leq R_0 <\infty,
\end{align*}
for some sufficiently large constant $R_0>0$. This proves the desired mapping properties.

The $\delta$-net $Y$ is constructed as in \cite{M01}: For any $\delta \in (0,e^{-1})$, 
we set 
\begin{align*}
l_{\delta}:=\inf\{l_0\in \N: (1+l)^{\ql} C_{n,s} e^{-cl} \leq \delta\mbox{ for all } l\geq l_0\}, \quad  \delta':= (1+l_{\delta})^{-n-2}\delta,
\end{align*}
where $c, C_{n,s}>0$ are the constants from Proposition \ref{prop:smooth}.
In particular, these choices imply that for some constant $\tilde{C}>0$ we have $l_{\delta}\leq \tilde{C} \log(\delta^{-1})$. With this notation we further define
\begin{align*}
Y_{\delta}&:= \delta' \Z \cap [-R_0, R_0] ,\\
Y&:= \{\{\mat\}_{m_i,k_i,l_i}: \mat \in Y_{\delta} \mbox{ for } \max\{ m_1+k_1,m_2+k_2\} < l_{\delta}, \\
& \quad \quad  \mat= 0 \mbox{ else} \}.
\end{align*}
We claim that $Y$ is the desired $\delta$-net for $\Gamma(B_{r_0}^{\infty})$. To observe this, we let $a:=\{\mat\}_{m_i,k_i,l_i}$ with $a \in \Gamma(B_{r_0}^{\infty})$ and construct $b:=\{\matb\}_{m_i,k_i,l_i} \in Y$ such that $\|\matb-\mat\|_{X}\leq \delta$. Indeed, given $\{\mat\}_{m_i,k_i,l_i}\in \Gamma(B_{r_0}^{\infty})$, we define
\begin{align*}
\matb =
\left\{
\begin{array}{ll}
\argmin\limits_{\tilde{b} \in Y_{\delta}}|\tilde{b} -\mat| & \mbox{ if } \max\{m_1+k_1,m_2+k_2\} < l_{\delta},\\
0 & \mbox{ else}.
\end{array}
\right.
\end{align*}
Since $|\mat| \leq \|\mat\|_X \leq R_0$, we have 
\begin{align*}
|\mat-\matb| \leq
\left\{
\begin{array}{ll}
\delta' & \mbox{ if } \max\{m_1+k_1,m_2+k_2\} < l_{\delta},\\
|\mat| & \mbox{ else}.
\end{array}
\right.
\end{align*}
By \eqref{eq:mat_estimate} and the definition of $\delta'$ and $l_{\delta}$
\begin{align*}
\|a-b\|_{X}
= \sup\limits_{m_i,k_i,l_i}(1+\max\{m_1+k_1,m_2+k_2\})^{n+2} |\mat-\matb|\leq \delta.
\end{align*}
It remains to estimate the cardinality of $Y$. Recall from the proof of Proposition \ref{prop:smooth} that if $N_p$ is the number of tuples $(m_1,k_1,l_1,m_2,k_2,l_2)$ with $\max\{m_1+k_1,m_2+k_2\}=p$, then $N_p\leq 8(p+1)^{2n+1}$. Setting $N(\delta):= \sum\limits_{p=0}^{l_{\delta}-1} N_p$, a simple Riemann sum estimate gives the bound $N(\delta) \leq \frac{8}{2n+2} (l_{\delta}+1)^{2n+2} \leq 2 (l_{\delta}+1)^{2n+2}$. We thus have 
\begin{align*}
|Y| &\leq |Y_{\delta}|^{N(\delta)}
\leq (2R_0\delta^{-1}(1+l_{\delta})^{\q})^{2(l_{\delta}+1)^{2n+2}}
= \exp( \log(2R_0\delta^{-1}(1+l_{\delta})^{\q})2(l_{\delta}+1)^{2n+2}  ).
\end{align*}
Since $1+l_{\delta} \leq (\tilde{C}+1) \log(\delta^{-1})$, we get $|Y| \leq \exp(C \log(\delta^{-1})^{\qc})$ with $C>0$ depending on $\tilde{C}, R_0,n,s$. This concludes the argument.
\end{proof}

Complementary to the result of Lemma \ref{lem:smoothing}, which exploited the smoothing properties of $\Gamma(q)$ to show the existence of a ``relatively small" $\delta$-net, we next recall that
with respect to the $L^{\infty}$ topology, functions can be ``strongly separated". This is formalized in the presence of a ``large" $\epsilon$-discrete set.

\begin{lem}[Lemma 2 in \cite{M01}]
\label{lem:separate}
Let $n\in \N$ and $B_1 \subset \R^n$.
Let $\epsilon, \beta>0$ and for $m\in \N$ consider the set
\begin{align*}
X_{\epsilon, \beta,m} := \{f\in C_0^m(B_1): \|f\|_{L^\infty} \leq \epsilon, \ \|f\|_{C^m}\leq \beta \},
\end{align*}
with the metric induced by the $L^{\infty}$ norm. Then there exists $\mu>0$ such that for any $\beta>0$ and $\epsilon \in (0,\mu^{-1} \beta)$, there exists an $\epsilon$-discrete set $Z\subset X_{\epsilon,\beta,m}$ with 
\begin{align*}
|Z| \geq \exp(2^{-n-1}(\beta (\mu\epsilon)^{-1})^{n/m}).
\end{align*}
\end{lem}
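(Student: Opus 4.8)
The plan is to construct the $\epsilon$-discrete set $Z$ inside $X_{\epsilon,\beta,m}$ by a standard random/combinatorial argument: take a single nonzero bump function $\phi \in C^\infty_c(B_1)$ supported in a small ball, rescale it to scale $h>0$, and tile $B_1$ by roughly $h^{-n}$ disjoint translated copies $\phi_1,\dots,\phi_M$ of this rescaled bump, with $M \sim c_n h^{-n}$. For a sign vector $\sigma \in \{-1,+1\}^M$ set $f_\sigma := a\sum_{j=1}^M \sigma_j \phi_j$, where the amplitude $a>0$ is chosen so that $\|f_\sigma\|_{L^\infty} \leq \epsilon$. Because the bumps have disjoint supports, $\|f_\sigma\|_{L^\infty} = a\|\phi\|_{L^\infty}$, so we take $a = \epsilon/\|\phi\|_{L^\infty}$; likewise $\|f_\sigma\|_{C^m}$ is controlled by $a$ times the $C^m$-norm of a single rescaled bump, which is of order $a h^{-m}\|\phi\|_{C^m}$. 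Demanding this be $\leq \beta$ forces the relation $h^m \gtrsim a/\beta \sim \epsilon/\beta$, i.e.\ we pick $h$ comparable to $(\mu\epsilon/\beta)^{1/m}$ for a suitable constant $\mu = \mu(n,\phi) > 0$; the hypothesis $\epsilon < \mu^{-1}\beta$ guarantees $h$ is small enough that the tiling fits inside $B_1$. Then $M \sim c_n h^{-n} \sim c_n (\beta/(\mu\epsilon))^{n/m}$.

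Next I would extract the discrete set from this family. Two distinct sign vectors $\sigma,\sigma'$ differing in at least one coordinate $j$ give $\|f_\sigma - f_{\sigma'}\|_{L^\infty} \geq |f_\sigma(x_j) - f_{\sigma'}(x_j)| = 2a\|\phi\|_{L^\infty}$ at the center $x_j$ of that bump (again using disjoint supports), which is $2\epsilon \geq \epsilon$; so in fact the \emph{entire} family $\{f_\sigma\}_{\sigma\in\{\pm1\}^M}$ is already $\epsilon$-discrete, with cardinality $2^M$. This is more than enough, but since the statement only needs $|Z| \geq \exp(2^{-n-1}(\beta(\mu\epsilon)^{-1})^{n/m})$, I would simply note $2^M \geq e^{M/2} \geq \exp(2^{-n-1}(\beta/(\mu\epsilon))^{n/m})$ after absorbing the combinatorial constant $c_n$ and the $\log 2$ factor into the definition of $\mu$ (or, if one prefers to keep $\mu$ fixed, into a refinement: choose a slightly coarser sub-tiling so that $M \geq 2^{-n-1}(\beta/(\mu\epsilon))^{n/m}$ holds on the nose). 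Here the factor $2^{-n-1}$ is exactly the slack one gets from not packing the small balls optimally — a unit ball can be partitioned into at least $2^{-n}$-fraction-of-volume many disjoint balls of radius $h$, up to the $h\to 0$ asymptotics, and one more factor of $2$ handles the $\log 2$.

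There is essentially no hard analytic step here; the argument is purely a volume-packing and disjoint-support computation. The one point requiring a little care is the bookkeeping of constants so that the final bound comes out with the precise exponent $2^{-n-1}$ and the universal $\mu$ independent of $\epsilon,\beta,m$ — in particular one must check that the $C^m$-bound on a rescaled bump scales like $h^{-m}$ uniformly (the top-order derivatives dominate as $h\to0$, so $\|\phi(\cdot/h)\|_{C^m} \leq C h^{-m}\|\phi\|_{C^m}$ with $C$ absolute), and that this is the only place where $m$ enters, yielding the $n/m$ exponent. I would also remark that $Z \subset C^m_0(B_1)$ with the stated norm bounds by construction, and that the metric is the $L^\infty$ one as required. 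Since this is Lemma 2 of \cite{M01}, I would either cite that reference directly for the full details or reproduce the short argument above for completeness.
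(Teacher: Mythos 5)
Your proposal is correct and follows essentially the same tiling-and-disjoint-bumps argument as the paper (which takes $\mu = n^{m/2}\|\psi\|_{C^m}$, tiles an inscribed cube into $N^n$ subcubes, and uses $\{0,1\}$-coefficients rather than your $\{\pm 1\}$, but these are inessential variants). The bookkeeping you outline — the $C^m$-norm scaling $\sim h^{-m}$, the $L^\infty$ separation from disjoint supports, and absorbing the floor-function and $\log 2$ losses into the $2^{-n-1}$ factor — matches the paper's calculation.
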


The proof is identical as in \cite{M01}. We present it only for self-containedness.

\begin{proof}
Let $\psi \in C^{\infty}_0(B_1)$ with $\|\psi\|_{L^{\infty}} =1$. Let $\mu:= n^{m/2}\|\psi\|_{C^{m}}$. We divide the cube $[-\frac{1}{\sqrt{n}},\frac{1}{\sqrt{n}}]^n$ into $N^n$
smaller, (up to null-sets) disjoint subcubes with centers $y_1,\dots, y_N$ and side length $\frac{2}{N\sqrt{n}}$.
We consider the set
\begin{align*}
Z:=\left\{ \epsilon \sum\limits_{j=1}^{N^n} \sigma_j \psi(N \sqrt{n}(x-y_j)): \sigma_j \in \{0,1\} \right\} .
\end{align*}
Since for $\tilde{\psi}\in Z$ it holds that $\|\tilde{\psi}\|_{C^{m}}\leq \epsilon (N \sqrt{n})^m \|\psi\|_{C^m}$, we will have $\|\tilde{\psi}\|_{C^{m}} \leq \beta$ if 
\[
N := \max\{k\in \N: k \leq (\beta (\mu \epsilon)^{-1})^{1/m} \}.
\]
Since also $\|\tilde{\psi}\|_{L^{\infty}}\leq \epsilon$, we infer that $Z \subset X_{\epsilon,\beta,m}$ by our choice of $N\in \N$. The estimate on the cardinality for $Z$ follows by noting that $|Z|= 2^{N^n}$ and by plugging in the estimate $N \geq \frac{1}{2} (\frac{\beta}{\epsilon \mu})^{\frac{1}{m}}$.
\end{proof}

Combining Lemma \ref{lem:smoothing} and \ref{lem:separate} and applying the pigeonhole principle finally yields the argument for Theorem \ref{thm:main}.

\begin{proof}[Proof of Theorem \ref{thm:main}]
Let $r_0$ be as in Lemma \ref{lem:smoothing}, and let $\mu>0$ be as in Lemma \ref{lem:separate}. We choose $\beta>0$ such that $\beta/\mu = (C_1 2^{n+1})^{m/n}$, where $C_1>1$ is a sufficiently large constant that is to be determined.
Let $\epsilon\in(0,\min\{r_0/2,1, \mu^{-1}\beta\})$ and $q_0 \in L^{\infty}(B_1)$ with $\|q_0\|_{L^{\infty}}\leq r_0/2$ be arbitrary. By Lemma \ref{lem:separate} the set $q_0+ X_{m,\epsilon,\beta}$ has an $\epsilon$-discrete subset $q_0 +Z$ with
\begin{align*}
|Z| \geq \exp(2^{-n-1}(\beta (\mu\epsilon)^{-1})^{n/m}) \geq
\exp( C_1 \epsilon^{-n/m}).
\end{align*}
As $q_0 + X_{m,\epsilon,\beta} \subset B_{r_0}^{\infty}$, the set $Y$ from Lemma \ref{lem:smoothing} is a $\delta$-net for $\Gamma(q_0+X_{m,\epsilon,\beta})$.
Choosing $\delta = \exp(-\epsilon^{-\frac{n}{m} \frac{1}{2n+3}})$ thus yields that
\begin{align*}
|Y|\leq  \exp(C (\log (\delta^{-1}))^{2n+3} )\leq \exp(C \epsilon^{-n/m}).
\end{align*}
If $C_1 > C$, we thus infer that $|Z| > |Y|$ and therefore also $|q_0 + Z| > |Y|$. This implies that there exist $q_1,q_2 \in q_0+Z$ such that $\Gamma(q_1)$ and $\Gamma(q_2)$ are in the same ball of the $\delta$-net $Y$. Hence, from \eqref{eq:gammaq_estimate} we obtain that
\begin{align*}
 \|\Lambda_{q_1}-\Lambda_{q_2}\|_{L^2(B_3 \setminus \overline{B}_2) \to L^2(B_3 \setminus \overline{B}_2)} 
\leq 4\|\Gamma(q_1) - \Gamma(q_2)\|_{X} \leq 8 \delta, 
\end{align*}
which yields the first estimate in \eqref{eq:instab}. The remaining ones follow by the definition of $X_{\epsilon,\beta,m}$ and of $Z$ (in fact $\|z_1-z_2\|_{L^{\infty}} = \epsilon$ for any distinct $z_1, z_2 \in Z$).
\end{proof}

We remark that the proof for Corollary \ref{cor:main} follows along exactly the same lines as the proof of Corollary 2 in \cite{M01}, if the operator $-\D$ is replaced by the operator $(-\D)^s$. As it does not involve any new ideas, we omit its proof and refer to the argument in \cite{M01}.

\section{Approximation}
\label{sec:approx}

Again exploiting the smoothing properties of the basis from Lemma \ref{lem:orthog}, in this section we provide the argument for Theorem \ref{prop:approx}. This in particular essentially proves the optimality of the moduli of continuity in the approximation results in \cite{RS17}.

\begin{proof}[Proof of Theorem \ref{prop:approx}]
We use the basis which was constructed in Lemma \ref{lem:orthog}: We recall that the functions $f_{m,k,l}(r\omega):=g_{m,k}(r)h_{m,l}(\omega)$ from Lemma \ref{lem:orthog}, where $m, k \geq 0$ and $0 \leq l \leq l_m$, form an orthonormal basis of $L^2(B_3 \setminus \overline{B}_2)$ and are associated with the functions
$u_{m,k,l} := A_0(f_{m,k,l})$, which are solutions of \eqref{eq:Poisson}. Due to the structure of the Poisson operator for the fractional Laplacian (c.f.\ \eqref{eq:u_explicit}), these functions are again of the form 
\begin{align}
\label{eq:not}
u_{m,k,l}(r\omega) = q_{m,k}(r) h_{m,l}(\omega).
\end{align}
Here $h_{m,l}(\omega)$ denote the spherical harmonics and $q_{m,k}(r)$ are explicit functions (c.f.\ Lemma \ref{lem:orthog} and its proof), which however are in general not pairwise orthogonal.

We note that it is possible to write any function $u$ satisfying condition (i) in Theorem \ref{prop:approx} with boundary values $f= \sum\limits_{m,k,l}\alpha_{m,k,l} f_{m,k,l} \in L^2(B_3 \setminus \overline{B}_2)$ as
\begin{align}
\label{eq:decomp}
u = \sum\limits_{m,k,l} \alpha_{m,k,l} u_{m,k,l}.
\end{align}
This sum converges in $L^2(B_1)$, since by the orthogonality of the spherical harmonics $h_{m,l}$ and by \eqref{umkl_ltwo_estimate} one has 
\begin{align*}
\| \sum \alpha_{m,k,l} u_{m,k,l} \|_{L^2(B_1)}^2 &= \sum_{m,l} \| \sum_k \alpha_{m,k,l} u_{m,k,l} \|_{L^2(B_1)}^2 \leq c_{n,s}^2 \sum_{m,l} 2^{-2m} ( \sum_k |\alpha_{m,k,l}| 2^{-k})^2 \\
 &\leq c_{n,s}^2 \sum_{m,k,l} |\alpha_{m,k,l}|^2.
\end{align*}

With this preparation at hand, 
we define the sequence of functions $\{v_p\}_{p\in \N}$ that will satisfy the assertions in Theorem \ref{prop:approx} as
\begin{align*}
v_{p}(r\omega):= A_0 \left(\frac{f_{p,0,0}(r\omega)}{\|u_{p,0,0}\|_{L^2(B_1)}}\right)= \frac{u_{p,0,0}(r \omega)}{\|u_{p,0,0}\|_{L^2(B_1)}}
\end{align*}
Thus in particular, $\|v_{p}\|_{L^2(B_1)}=1$. For convenience, we also abbreviate $\alpha_0:= \|u_{p,0,0}\|_{L^2(B_1)}^{-1}$.

We now proceed in three steps:\\
\emph{Step 1:} Assuming that a function $u$ satisfies the conditions (i), (ii) in Theorem \ref{prop:approx} with a decomposition as in \eqref{eq:decomp}, we claim that
\begin{align}
\label{eq:mass}
\|\sum\limits_{k=0}^{\infty} \alpha_{p,k,0} u_{p,k,0}\|_{L^2(B_1)} \geq \frac{1}{2}.
\end{align}
\emph{Argument:} This follows from orthogonality, the normalization of $v_{p}$ and the structure of the functions $u_{m,k,l}$. Indeed, by orthonormality of the spherical harmonics and by using the notation introduced in \eqref{eq:not}, we have that
\begin{align*}
p^{-2} 
&\geq \|u - v_{p}\|_{L^2(B_1)}^2\\
& = \|\sum\limits_{k=0}^{\infty} \alpha_{p,k,0} q_{p,k} - \alpha_0 q_{p,0}\|_{L^2_r([0,1])}^2 + \sum\limits_{(m,l) \neq (p,0)}\|\sum\limits_{k=0}^{\infty}\alpha_{m,k,l} q_{p,k}\|_{L^2_r([0,1])}^2.
\end{align*}
Here $L^2_r([0,1])$ denotes the radial part of the $L^2$ norm after introducing polar coordinates and integrating out the spherical variables, and $\alpha_0= \frac{1}{\|u_{p,0,0}\|_{L^2(B_1)}}$.
In particular, 
\begin{align*}
p^{-1}
&\geq \|\sum\limits_{k=0}^{\infty} \alpha_{p,k,0} q_{p,k} - \alpha_0 q_{p,0}\|_{L^2_r([0,1])}
= \|\sum\limits_{k=0}^{\infty} \alpha_{p,k,0} u_{p,k,0} - v_{p}\|_{L^2(B_1)} \\
&\geq \|v_{p}\|_{L^2(B_1)} - \|\sum\limits_{k=0}^{\infty} \alpha_{p,k,0} u_{p,k,0}\|_{L^2(B_1)}.
\end{align*}
Choosing $p\geq 2$, recalling the normalization of $v_{p}$ and rearranging hence implies \eqref{eq:mass}.\\
\emph{Step 2:} Let $u$ be as in Step 1. We claim that there exists $k_0 \in \N \cup \{0\}$ such that 
\begin{align}
\label{eq:size}
|\alpha_{p,k_0,0}| \geq \frac{2^{p-2}}{c_{n,s}},
\end{align} 
where $c_{n,s}>0$ is the constant in \eqref{umkl_ltwo_estimate}.\\
\emph{Argument:} We argue by contradiction and assume that the claim were wrong. Thus for all $k$ we would have $|\alpha_{p,k,0}|< \frac{2^{p-2}}{c_{n,s}}$. By Step 1, the triangle inequality and the bound \eqref{umkl_ltwo_estimate} this would lead to the following chain of estimates:
\begin{align*}
\frac{1}{2} 
&\leq \|\sum\limits_{k=0}^{\infty} \alpha_{p,k,0} u_{p,k,0}\|_{L^2(B_1)} 
 \leq \sum\limits_{k=0}^{\infty}|\alpha_{p,k,0}|\| u_{p,k,0}\|_{L^2(B_1)} \\
 &< \sum\limits_{k=0}^{\infty} \frac{2^{p-2}}{c_{n,s}} c_{n,s} 2^{-p-k} = \frac{1}{2}.
\end{align*}
This is a contradiction.\\
\emph{Step 3:} Using Steps 1 and 2 we conclude the proof. Indeed, by Steps 1 and 2 for any $u=A_0(f)$ with data $f= \sum\limits_{m,k,l} \alpha_{m,k,l} f_{m,k,l} \in L^2(B_3 \setminus \overline{B}_2)$ satisfying (i), (ii) in Theorem \ref{prop:approx} for the function $v_{p}$ defined above, we on the one hand obtain that by orthonormality of the functions $f_{m,k,l}$ and with $k_0 \in \N\cup \{0\}$ as in Step 2
\begin{align*}
\|f\|_{L^2(B_3 \setminus \overline{B}_2)}
\geq |\alpha_{p,k_0,0}|\|f_{p,k_0,0}\|_{L^2(B_3 \setminus \overline{B}_2)} 
\geq \frac{2^{p-2}}{c_{n,s}}.
\end{align*}
On the other hand, by the initial normalization $\|v_{p}\|_{L^2(B_1)}=1$. Combining the last two observations and defining $c_0:=\frac{1}{4 c_{n,s}}$ implies the claim.
\end{proof}

\section{The 1D Calder\'on Problem and the Truncated Hilbert Transform}
\label{sec:1D}
In this section we discuss the one-dimensional situation in greater detail:
In the one-dimensional setting the mapping properties of $A_0$ can be made even more explicit by using a close relation between the fractional Laplacian and the truncated Hilbert transform.
To illustrate this, we again consider the Poisson operator (where for simplicity the exterior data is zero in $[-3,-2]$)
\begin{align*}
A_q:L^2([2,3]) \rightarrow L^2([-1,1]), \ g \mapsto \chi_{[-1,1]} u,
\end{align*}
where $u$ is a solution to
\begin{align*}
((-\D)^s+q) u &= 0 \mbox{ in } [-1,1],\\
u & = g \chi_{[2,3]} \mbox{ in } \R \setminus [-1,1].
\end{align*} 
By the representation formula for the solution to the fractional Laplacian in one dimension (c.f.\ for instance \cite{B15}) we infer that
\begin{align}
\label{eq:u}
\chi_{[-1,1]} A_0 g(x) = c(s)\int\limits_{[2,3]} \left( \frac{1-|x|^2}{|y|^2-1} \right)^{s} \frac{g(y)}{|x-y|} \,dy. 
\end{align}
Based on this representation formula, it will be possible to relate $A_0$ to the truncated Hilbert transform. This will allow us to transfer results on the well-studied truncated Hilbert transform to the fractional Laplacian. Before formulating this connection precisely, we recall that the Hilbert transform is defined as the singular integral operator
\begin{align*}
H:L^2(\R) \rightarrow L^2(\R), \ f\mapsto H(f)(t):=p.v.\int\limits_{\R}\frac{f(y)}{t-y} \,dy.
\end{align*}
This principal value integral is immediately seen to be well-defined on Lipschitz functions, but can be extended to the whole class of $L^2$ functions. This can for instance be achieved by noting that $H(f)=\F^{-1}(\F f(\xi) (-i \,\sgn(\xi)))$, where $\F: L^2(\R) \rightarrow L^2(\R), \ f \mapsto \F(f)= \int\limits_{\R} f(x) e^{-2\pi i x \xi} \,dx$ denotes the Fourier transform (see for example \cite{G08}, Chapter 4 for more information on singular integrals and the Hilbert transform).

We define the truncated Hilbert transforms 
\begin{align*}
 & H_{[2,3]}: L^2([2,3]) \to L^2([-1,1]), \ \ f \mapsto H(E_0 f)|_{[-1,1]}, \\
 & H_{[-1,1]}: L^2([-1,1]) \to L^2([2,3]), \ \ f \mapsto H(E_0 f)|_{[2,3]},
\end{align*}
where $E_0$ denotes extension by zero to $\R$.

\begin{lem}
\label{lem:HT_frac}
Let $A_0:L^2([2,3]) \rightarrow L^2([-1,1])$ be the mapping from above. 
Then we have that for all $x\in [-1,1]$
\begin{align*}
A_0 g (x) = -c(s)(1-x^2)^s H_{[2,3]} ((\cdot^2-1)^{-s} g)(x).
\end{align*}
\end{lem}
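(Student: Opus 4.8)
The plan is to simply unwind the representation formula \eqref{eq:u} and recognize the integral kernel as (a weighted version of) the truncated Hilbert transform. Starting from the identity
\begin{align*}
\chi_{[-1,1]} A_0 g(x) = c(s)\int\limits_{[2,3]} \left( \frac{1-|x|^2}{|y|^2-1} \right)^{s} \frac{g(y)}{|x-y|} \,dy,
\end{align*}
the first observation is that for $x \in [-1,1]$ and $y \in [2,3]$ one always has $x < y$, so $|x-y| = y - x$ and hence $\frac{1}{|x-y|} = \frac{-1}{x-y}$. This is the only place where the disjointness of $[-1,1]$ and $[2,3]$ (and their ordering on the real line) is used, and it is precisely what lets us drop the principal value and identify the kernel with $\frac{1}{x-y}$, the Hilbert transform kernel.

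Next I would pull the $x$-dependent weight $(1-|x|^2)^s = (1-x^2)^s$ (which is constant in the integration variable $y$) out of the integral, and absorb the $y$-dependent weight $(|y|^2-1)^{-s} = (y^2-1)^{-s}$ into the function $g$, writing $\tilde{g}(y) := (y^2-1)^{-s} g(y)$, which lies in $L^2([2,3])$ since $(y^2-1)^{-s}$ is bounded on $[2,3]$. With these rearrangements,
\begin{align*}
A_0 g(x) = -c(s)(1-x^2)^s \int\limits_{[2,3]} \frac{\tilde{g}(y)}{x-y}\,dy
= -c(s)(1-x^2)^s \, H(E_0 \tilde{g})(x),
\end{align*}
for $x \in [-1,1]$, where in the last step I use that the principal value integral defining $H(E_0\tilde{g})(x)$ has no singularity on $[-1,1]$ (since $E_0 \tilde g$ is supported in $[2,3]$) and therefore equals the ordinary integral $\int_{[2,3]} \frac{\tilde g(y)}{x-y}\,dy$. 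Recognizing $H(E_0 \tilde{g})|_{[-1,1]} = H_{[2,3]}\tilde g = H_{[2,3]}((\cdot^2-1)^{-s} g)$ by the definition of the truncated Hilbert transform gives exactly the claimed identity.

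This argument is essentially a bookkeeping exercise, so there is no real obstacle; the only point requiring a word of care is the passage from the principal value integral to the ordinary one, but this is immediate since the kernel $\frac{1}{x-y}$ is smooth and bounded for $x \in [-1,1]$, $y \in [2,3]$, so that the $p.v.$ is superfluous here — this is again just the disjointness of the two intervals. One should also note that $\tilde g \in L^2(\mathbb{R})$ after extension by zero, so that $H(E_0 \tilde g)$ is well-defined as an $L^2$ function and the pointwise formula is valid a.e.\ on $[-1,1]$, which suffices for the stated equality in $L^2([-1,1])$.
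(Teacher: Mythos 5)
Your proposal is correct and matches the paper's own proof: both simply rewrite the representation formula \eqref{eq:u}, using $|x-y| = -(x-y)$ for $x \in [-1,1]$, $y \in [2,3]$, pull out the weight $(1-x^2)^s$, absorb $(y^2-1)^{-s}$ into $g$, and recognize the truncated Hilbert transform. Your added remarks on the superfluity of the principal value and the $L^2$ well-definedness are harmless elaborations of the same computation.
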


\begin{proof}
The proof follows immediately by rewriting the expression from \eqref{eq:u}: For $x\in [-1,1]$ we have that
\begin{align*}
A_0g(x)
&= c(s)\int\limits_{[2,3]} \left( \frac{1-|x|^2}{|y|^2-1} \right)^{s} \frac{g(y)}{|x-y|}dy\\
&= -c(s)(1-x^2)^s\int\limits_{[2,3]} \frac{(y^2-1)^{-s} g(y)}{x-y}dy\\
& = -c(s)(1-x^2)^s H_{[2,3]} ((\cdot^2-1)^{-s} g)(x). \qedhere
\end{align*}
\end{proof}

Motivated by Lemma \ref{lem:HT_frac}, we recall some properties of the singular value decomposition $(\sigma_l, f_l, g_l) \in \R_+ \times L^2([2,3]) \times L^2([-1,1])$ of the truncated Hilbert transform $H_{[2,3]}$ (see \cite{K10}). The sequences $\{f_l\}_{l\in \N}$ and $\{g_l\}_{l\in\N}$ are orthonormal bases of $L^2([2,3])$ and $L^2([-1,1])$, which are generalized eigenfunctions of the truncated Hilbert transform in the sense that
\begin{align*}
H_{[2,3]} f_l = \sigma_l g_l, \quad H_{[-1,1]} g_l = -\sigma_l f_l. 
\end{align*}
Here the completeness of the orthonormal system $\{g_l\}_{l\in \N} \subset L^2([-1,1])$ can for instance be obtained by a combination of unique continuation and duality arguments (see for instance Lemma 2.7 in \cite{Rue17}).
The singular values asymptotically satisfy exponential bounds (see \cite{KT12}):
\begin{align}
\label{eq:eigen_asymp}
0<\sigma_l \leq e^{-cl}.
\end{align}
Moreover, the functions $f_l$ are also eigenfunctions of a (degenerate) Sturm-Liouville problem. They are eigenfunctions (with weighted Neumann boundary conditions) of the operator
\begin{align}
\label{eq:L1}
L(x,d_x)f:= (P(x) f'(x))' + 2(x-\sigma)^2f(x),
\end{align}
where
\begin{align*}
P(x):=(x-1)(x+1)(x-2)(x-3), \ \sigma = \frac{5}{4}.
\end{align*}
The associated eigenvalues $\lambda_n$ satisfy $\lambda_n \geq c_1 n^2$ (see \cite{K10}, \cite{KT12}).
Based on this and Lemma \ref{lem:HT_frac}, we define function spaces and a basis adapted to the operator $A_0$:

\begin{defi}
\label{defi:function_spaces}
Let $(\sigma_l, f_l, g_l) \in \R_+ \times L^2([2,3]) \times L^2([-1,1])$ be the singular value decomposition of $H_{[2,3]}$. Then we define the weighted functions
\begin{align*}
\hat{f}_l(y):= (y^2-1)^{s} f_l(y), \quad
\hat{g}_l(x):=(1-x^2)^s g_l(x),
\end{align*}
and for $\gamma \in \R$ we consider the function spaces $L^2_{\gamma}([2,3]):= L^2([2,3])$ equipped with the scalar product
\begin{align*}
(f,g)_{L^2_{\gamma}([2,3])}:= (f,(\cdot^2-1)^{2 \gamma} g)_{L^2([2,3])}.
\end{align*}
\end{defi}

\begin{rmk}
\label{rmk:basis}
We remark that for $s\in(-1,1)$ the spaces $L^{2}([2,3])$ and $L^2_{\pm s}([2,3])$ are isomorphic. Indeed, due to the strict disjointness of the intervals $[2,3],[-1,1]$, there exist constants $c_1,c_2 \in (1/3,3)$ such that for all $s\in (-1,1)$
\begin{align*}
c_1\|f\|_{L^{2}_{s}([2,3])} \leq \|f\|_{L^2([2,3])} \leq c_2 \|f\|_{L^{2}_{s}([2,3])}.
\end{align*}
The advantage of working with the spaces $L^{2}_{\pm s}([2,3])$ instead of working with the standard space $L^2([2,3])$ is that the functions $\hat{f}_l$ are well-adapted to the fractional Laplacian. Indeed, by virtue of Lemma \ref{lem:HT_frac} they satisfy
\begin{align}
\label{eq:sing}
A_0 \hat{f}_l = -c_s (1-x^2)^s H_{[2,3]} (f_l) = -c_s (1-x^2)^s \sigma_l g_l
= -c_s \sigma_l \hat{g}_l.
\end{align}
Moreover, the functions $\{\hat{f}_l\}_{l\in \N}$ form an orthonormal basis of $L^2_{-s}([2,3])$ (as the functions $\{f_l\}_{l\in \N}$ are orthonormal with respect to $L^2([2,3])$). 
\end{rmk}

As a side product of this relation between the one-dimensional fractional Laplace operator and the truncated Hilbert transform, we derive another version of Theorem \ref{prop:approx} showing the (almost) optimality of the one-dimensional quantitative approximation result of arbitrary functions by $s$-harmonic functions (Theorem 1.3 in \cite{RS17}):

\begin{thm}
\label{thm:exp_inst_approx}
There exists a sequence $\{h_{k}\}_{k\in \N} \in L^2([-1,1])$ such that the following holds: 
For any function $\tilde{h}_{k}$ with the property that
\begin{itemize}
\item[(i)] $\tilde{h}_{k}$ approximates $h_{k}$ in the sense that
\begin{align}
\label{eq:approx_w}
\|(1-x^2)^{-s} (h_{k}-\tilde{h}_{k})\|_{L^2([-1,1])}\leq k^{-1},
\end{align}
\item[(ii)] $\tilde{h}_{k}$ satisfies the equation
\begin{align*}
(-\p_{xx})^s \tilde{h}_{k} = 0 \mbox{ in } [-1,1], \ \tilde{h}_{k} = \chi_{[2,3]} \tilde{f}_{k} \mbox{ in } \R \setminus [-1,1]
\end{align*}
for some $\tilde{f}_{k} \in L^2([2,3])$,
\end{itemize}
the associated control function $\tilde{f}_{k}$ satisfies
\begin{align*}
\|\tilde{f}_{k}\|_{L^2([2,3])}\geq C e^{c k}.
\end{align*}
\end{thm}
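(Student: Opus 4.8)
The plan is to mimic the proof of Theorem~\ref{prop:approx}, but using the singular value basis $\{\hat f_l\}$ of $A_0$ coming from the truncated Hilbert transform instead of the basis $\{f_{m,k,l}\}$ from Lemma~\ref{lem:orthog}. By \eqref{eq:sing} we have $A_0 \hat f_l = -c_s \sigma_l \hat g_l$ with $0 < \sigma_l \leq e^{-cl}$, and the $\hat g_l$ are (up to the weight) an orthonormal system. The natural candidate is
\begin{align*}
h_k := \frac{A_0 \hat f_k}{\|A_0 \hat f_k\|_{L^2_{-s}([-1,1])}} = \frac{-c_s \sigma_k \hat g_k}{c_s \sigma_k \|\hat g_k\|_{L^2_{-s}([-1,1])}} = -\frac{\hat g_k}{\|g_k\|_{L^2([-1,1])}} = -\hat g_k,
\end{align*}
since $\|\hat g_k\|_{L^2_{-s}([-1,1])}^2 = \int_{[-1,1]} (1-x^2)^{2s} g_k(x)^2 (1-x^2)^{-2s}\,dx = \|g_k\|_{L^2([-1,1])}^2 = 1$; thus $h_k = -\hat g_k$ and $\|(1-x^2)^{-s} h_k\|_{L^2([-1,1])} = 1$.

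First I would observe that any $\tilde h_k$ satisfying (ii) can be written as $\tilde h_k = A_0 \tilde f_k$, and expand $\tilde f_k = \sum_l \alpha_l \hat f_l$ in the orthonormal basis $\{\hat f_l\}$ of $L^2_{-s}([2,3])$; then $\tilde h_k = A_0 \tilde f_k = -c_s \sum_l \alpha_l \sigma_l \hat g_l$. The approximation hypothesis \eqref{eq:approx_w} says $\|(1-x^2)^{-s}(h_k - \tilde h_k)\|_{L^2([-1,1])} \leq k^{-1}$, i.e. $\| (-\hat g_k) - (-c_s\sum_l \alpha_l\sigma_l \hat g_l) \|_{L^2_{-s}([-1,1])} \le k^{-1}$, which after dividing out the weight $(1-x^2)^s$ becomes $\| g_k - c_s \sum_l \alpha_l \sigma_l g_l \|_{L^2([-1,1])} \leq k^{-1}$. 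Since $\{g_l\}$ is orthonormal in $L^2([-1,1])$, matching the $g_k$ coefficient gives $|1 - c_s \alpha_k \sigma_k| \leq k^{-1}$, hence for $k \geq 2$ one gets $c_s |\alpha_k| \sigma_k \geq 1 - k^{-1} \geq \tfrac12$, so $|\alpha_k| \geq \frac{1}{2c_s \sigma_k} \geq \frac{1}{2c_s} e^{ck}$ by \eqref{eq:eigen_asymp}.

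Finally, by Parseval in $L^2_{-s}([2,3])$ (using that $\{\hat f_l\}$ is orthonormal there) and the norm equivalence from Remark~\ref{rmk:basis},
\begin{align*}
\|\tilde f_k\|_{L^2([2,3])} \geq c_1 \|\tilde f_k\|_{L^2_{-s}([2,3])} = c_1 \Big(\sum_l |\alpha_l|^2\Big)^{1/2} \geq c_1 |\alpha_k| \geq \frac{c_1}{2 c_s} e^{ck},
\end{align*}
which is the claimed lower bound with $C = c_1/(2c_s)$. I do not expect a genuine obstacle here: the argument is essentially a one-line coefficient comparison, cleaner than the three-step argument for Theorem~\ref{prop:approx} precisely because the $\hat f_l$ are true singular vectors of $A_0$, so there is no need for the contradiction step controlling the tail $\sum_{k}\alpha_{p,k,0}u_{p,k,0}$. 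The only minor points to be careful with are (a) keeping track of the weights $(1-x^2)^{\pm s}$ and $(y^2-1)^{\pm s}$ so that the $L^2_{\gamma}$ inner products line up correctly with \eqref{eq:sing}, and (b) invoking completeness of $\{g_l\}$ (noted after \eqref{eq:eigen_asymp}) if one wants $h_k$ to indeed lie in the closed span where the expansion of $\tilde h_k$ converges — but for the lower bound only the single coefficient comparison is needed, so even this is not strictly necessary.
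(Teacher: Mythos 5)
Your proof is correct and follows essentially the same route as the paper: both use the singular value basis $\hat f_l$, $\hat g_l$ adapted to $A_0$ via the truncated Hilbert transform, pick $h_k$ as (a sign of) $\hat g_k$, read off the $k$-th coefficient from the weighted approximation hypothesis to get $|\alpha_k|\gtrsim \sigma_k^{-1}\gtrsim e^{ck}$, and finish with orthogonality in $L^2_{-s}([2,3])$ plus the equivalence of the weighted and unweighted $L^2$-norms on $[2,3]$. Your version is just a slightly more explicit spelling-out (expanding $\tilde f_k = \sum_l \alpha_l \hat f_l$ and invoking Parseval) of what the paper phrases as a decomposition $\tilde f_k = -c_s^{-1}\sigma_k^{-1}\alpha_k\hat f_k + \bar f$ with $\bar f \perp \hat f_k$.
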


The proof is slightly simpler than the higher dimensional analogue in Section \ref{sec:approx}, since we can more directly exploit orthogonality.

\begin{proof}[Proof of Theorem \ref{thm:exp_inst_approx}]
We consider the functions $h_{k}:=  \hat{g}_k \in L^2([-1,1])$. Then by orthogonality of the functions $g_k$ with respect to the $L^2([-1,1])$ topology and due to the presence of the weight $(1-x^2)^{-s}$ in \eqref{eq:approx_w}, any approximation $\tilde{h}_{k}$ to $h_{k}$ within an $k^{-1}$ error threshold (in the weighted norm), has to be of the form
\begin{align*}
\tilde{h}_{k} = \alpha_k \hat{g}_k + \bar{g},
\end{align*}
with $(\bar{g},(1-x^2)^{-2s} \hat{g}_k )_{L^{2}([-1,1])}=0$ and $\alpha_k \geq 1-k^{-1}$. 
As a consequence of \eqref{eq:sing}, the associated control function $\tilde{f}_{k}$ has to be of the form 
\begin{align*}
\tilde{f}_{k} = - c_s^{-1} \sigma_k^{-1}\alpha_k \hat{f}_k + \bar{f},
\end{align*}
with $(\bar{f},\hat{f}_k )_{L^{2}_{-s}([2,3])}=0$.
Due to orthogonality in the space $L^2_{-s}([2,3])$ we therefore obtain that 
\begin{align*}
\|\tilde{f}_{k}\|_{L^2_{-s}([2,3])}\geq
C \alpha_k \sigma_k^{-1} \geq  C e^{c k}.
\end{align*} 
By virtue of the equivalence of the $L^2([2,3])$ and $L^2_{\pm s}([2,3])$ norms, this also yields a lower bound for the $L^2([2,3])$ norm of the control. 
\end{proof}

\section{Exponential Instability in Boundary-Bulk Measurements}
\label{sec:Hadamard}

Finally, we remark that the classical Hadamard example (see for instance \cite{ARV09}), showing that exponential instability can occur by passing from boundary to interior measurements, can also be directly transferred to the context of the fractional Laplacian (or rather its Caffarelli-Silvestre extension, see \cite{CS07}):

\begin{lem}
\label{lem:bound_bulk}
Let $I_s(y)$ denote the modified Bessel function of first kind.
The function $v_n(x,y)= C_s n^{-s} \sin(nx) y^{s} I_{s}(n y)$, for a suitable constant $C_s \neq 0$, is a solution to
\begin{equation}
\label{eq:exp_inst}
\begin{split}
\nabla \cdot y^{1-2s} \nabla v &= 0 \mbox{ in } \R^{2}_+,\\
v & = 0 \mbox{ on } \R \times \{0\},\\
\lim\limits_{y \rightarrow 0} y^{1-2s} \p_y v & = \sin(nx) \mbox{ on } \R \times \{0\}.
\end{split}
\end{equation}
For any $y_0>0$ this function satisfies as $n \to \infty$
\begin{align*}
v_n(x,y_0) \sim C_s n^{-s} \sin(nx) y_0^s (2\pi ny_0)^{-1/2} e^{ny_0}.
\end{align*}
\end{lem}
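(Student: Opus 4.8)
\textbf{Proof proposal for Lemma \ref{lem:bound_bulk}.}

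The plan is to verify directly that the proposed $v_n$ solves the degenerate elliptic boundary value problem \eqref{eq:exp_inst}, and then to extract the growth rate from the classical large-argument asymptotics of the modified Bessel function $I_s$. First I would separate variables: writing $v(x,y) = \sin(nx) w(y)$, the equation $\nabla \cdot (y^{1-2s} \nabla v) = 0$ becomes the ordinary differential equation $(y^{1-2s} w')' - n^2 y^{1-2s} w = 0$, i.e. $w'' + \frac{1-2s}{y} w' - n^2 w = 0$. The substitution $w(y) = y^{s} \phi(ny)$ transforms this, after a short computation, into the modified Bessel equation $t^2 \phi'' + t \phi' - (t^2 + s^2)\phi = 0$ in the variable $t = ny$, whose solution that is bounded (vanishing) as $y \to 0$ is $\phi(t) = I_s(t)$. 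This gives $w(y) = y^s I_s(ny)$ and hence the stated form of $v_n$ up to the normalizing constant.

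Next I would check the two boundary conditions on $\{y=0\}$. Since $I_s(t) \sim \frac{1}{\Gamma(s+1)}(t/2)^s$ as $t \to 0^+$, we get $y^s I_s(ny) \sim c\, n^s y^{2s} \to 0$, so the Dirichlet condition $v=0$ on $\R \times \{0\}$ holds. For the Neumann-type condition we compute $y^{1-2s}\p_y v = \sin(nx)\, y^{1-2s} (y^s I_s(ny))'$; using $(y^s I_s(ny))' = n y^s I_s'(ny) + s y^{s-1} I_s(ny)$ together with the small-$t$ expansions $I_s(t) \sim \frac{(t/2)^s}{\Gamma(s+1)}$ and $I_s'(t) \sim \frac{s (t/2)^{s-1}}{2\Gamma(s+1)}$, the leading terms combine so that $y^{1-2s}\p_y v \to (\text{const})\, n^{2s}\sin(nx)$ as $y \to 0$. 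Choosing $C_s$ to cancel this constant (and the factor $n^{-s}$ already built into $v_n$ absorbs the remaining $n$-power) makes the weighted conormal derivative equal to $\sin(nx)$, as required. The main obstacle here is purely bookkeeping: one must be careful that the two terms in $(y^s I_s(ny))'$ do not cancel at leading order and that the power of $n$ in the normalization \eqref{eq:exp_inst} is matched correctly; this is a routine but slightly delicate check with the Bessel expansions.

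Finally, the asymptotic statement at fixed $y_0 > 0$ follows from the standard large-argument asymptotic $I_s(t) \sim \frac{e^{t}}{\sqrt{2\pi t}}$ as $t \to \infty$. Substituting $t = n y_0$ into $v_n(x,y_0) = C_s n^{-s}\sin(nx) y_0^s I_s(ny_0)$ gives $v_n(x,y_0) \sim C_s n^{-s}\sin(nx)\, y_0^s (2\pi n y_0)^{-1/2} e^{n y_0}$, which is exactly the claimed expression. This exhibits the exponential-in-$n$ growth of the interior trace against a boundary datum $\sin(nx)$ of unit-order size, which is the Hadamard-type instability phenomenon for the Caffarelli-Silvestre extension. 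No serious difficulty arises in this last step; it is a direct application of the classical Bessel asymptotics.
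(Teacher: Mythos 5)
Your proof follows essentially the same route as the paper: separate variables, reduce the bulk equation to the modified Bessel ODE for $I_s(ny)$, verify the Dirichlet and weighted-Neumann conditions using the small-argument expansions (the paper uses the identity $I_s'(z) = I_{s+1}(z) + \tfrac{s}{z}I_s(z)$ where you differentiate the leading asymptotic directly, but these are interchangeable), and read off the growth from $I_s(z)\sim e^z/\sqrt{2\pi z}$. One small bookkeeping slip: the unnormalized conormal derivative $y^{1-2s}\p_y\bigl(\sin(nx)\,y^s I_s(ny)\bigr)$ tends to a constant times $n^{s}\sin(nx)$, not $n^{2s}\sin(nx)$, which is precisely why the prefactor $n^{-s}$ in $v_n$ makes the limit $n$-independent; with $n^{2s}$ the cancellation you assert would not occur.
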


\begin{proof}
The result follows from a direct calculation, e.g.\ by inserting $v(x,y)$ into \eqref{eq:exp_inst}. Separating variables, the bulk equation for $I_s(ny)$ turns into a modified Bessel equation: For $z=ny$ we have
\begin{align*}
z^2 I_{s}''(z) + z I_s'(z) - (z^2+s^2) I_s(z)=0 \mbox{ in } (0,\infty). 
\end{align*}
As the modified Bessel function satisfies (see \cite{NIST})
\begin{align*}
I_{s}(z) &\sim \left( \frac{1}{2} z \right)^{s} \frac{1}{\Gamma(s+1)} \mbox{ as } z \rightarrow 0,\\
I_{s}'(z) &= I_{s+1}(z) + \frac{s}{z} I_{s}(z),
\end{align*}
the boundary conditions in \eqref{eq:exp_inst} are then satisfied by choosing the constant $C_s \neq 0$ appropriately. 
Moreover, we have (see \cite{NIST})
\begin{align*}
I_s(z) \sim \frac{e^{z}}{\sqrt{2\pi z}} \mbox{ as } z \rightarrow \infty. 
\end{align*}
The asymptotics as $n\rightarrow \infty$ thus follow from the asymptotics of $I_s(z)$ as $z \rightarrow \infty$.
\end{proof}

The example given in Lemma \ref{lem:bound_bulk} shows that there is an exponential instability in controlling bulk by boundary values for solutions to the Caffarelli-Silvestre extension problem.

\bibliographystyle{alpha}
\bibliography{citationsHTI}

\begin{thebibliography}{ARRV09}

\bibitem[Ale88]{A88}
Giovanni Alessandrini.
\newblock Stable determination of conductivity by boundary measurements.
\newblock {\em Applicable Analysis}, 27(1-3):153--172, 1988.

\bibitem[Ale90]{A90}
Giovanni Alessandrini.
\newblock Singular solutions of elliptic equations and the determination of
  conductivity by boundary measurements.
\newblock {\em Journal of Differential Equations}, 84(2):252--272, 1990.

\bibitem[ARRV09]{ARV09}
Giovanni Alessandrini, Luca Rondi, Edi Rosset, and Sergio Vessella.
\newblock The stability for the {C}auchy problem for elliptic equations.
\newblock {\em Inverse Problems}, 25(12):123004, 2009.

\bibitem[Buc16]{B15}
Claudia Bucur.
\newblock Some observations on the {G}reen function for the ball in the
  fractional {L}aplace framework.
\newblock {\em Communications in Pure and Applied Analysis}, 15(2):657--699,
  2016.

\bibitem[CS07]{CS07}
Luis Caffarelli and Luis Silvestre.
\newblock An extension problem related to the fractional {L}aplacian.
\newblock {\em Communications in partial differential equations},
  32(8):1245--1260, 2007.

\bibitem[DCR03]{DR03}
Michele Di~Cristo and Luca Rondi.
\newblock Examples of exponential instability for inverse inclusion and
  scattering problems.
\newblock {\em Inverse Problems}, 19(3):685, 2003.

\bibitem[DSV17]{DSV14}
Serena Dipierro, Ovidiu Savin, and Enrico Valdinoci.
\newblock All functions are locally $ s $-harmonic up to a small error.
\newblock {\em Journal of EMS}, 19(4):957--966, 2017.

\bibitem[Gra08]{G08}
Loukas Grafakos.
\newblock {\em Classical {F}ourier analysis}, volume~1.
\newblock Springer, 2008.

\bibitem[Gru15]{G15}
Gerd Grubb.
\newblock Fractional {L}aplacians on domains, a development of
  {H}{\"o}rmander's theory of $\mu$-transmission pseudodifferential operators.
\newblock {\em Advances in Mathematics}, 268:478--528, 2015.

\bibitem[GSU16]{GSU16}
Tuhin Ghosh, Mikko Salo, and Gunther Uhlmann.
\newblock The {C}alder{\'o}n problem for the fractional {S}chr{\"o}dinger
  equation.
\newblock {\em arXiv preprint arXiv:1609.09248}, 2016.

\bibitem[Kat10]{K10}
Alexander Katsevich.
\newblock Singular value decomposition for the truncated {H}ilbert transform.
\newblock {\em Inverse Problems}, 26(11):115011, 2010.

\bibitem[KT12]{KT12}
Alexander Katsevich and Alexander Tovbis.
\newblock Finite {H}ilbert transform with incomplete data: null-space and
  singular values.
\newblock {\em Inverse Problems}, 28(10):105006, 2012.

\bibitem[Man01]{M01}
Niculae Mandache.
\newblock Exponential instability in an inverse problem for the
  {S}chr{\"o}dinger equation.
\newblock {\em Inverse Problems}, 17(5):1435, 2001.

\bibitem[Olv10]{NIST}
Frank~WJ Olver.
\newblock {\em NIST Handbook of Mathematical Functions Hardback and CD-ROM}.
\newblock Cambridge University Press, 2010.

\bibitem[RS17]{RS17}
Angkana R{\"u}land and Mikko Salo.
\newblock The fractional {C}alder{\'o}n problem: Low regularity and stability.
\newblock {\em arXiv preprint arXiv:1708.06294}, 2017.

\bibitem[R{\"u}l17]{Rue17}
Angkana R{\"u}land.
\newblock Quantitative invertibility and approximation for the truncated
  {H}ilbert and {R}iesz transforms.
\newblock {\em arXiv preprint arXiv:1708.04285}, 2017.

\bibitem[Sog17]{Sogge17}
Christopher~D Sogge.
\newblock {\em Fourier integrals in classical analysis}, volume 210.
\newblock Cambridge University Press, 2017.

\end{thebibliography}

\end{document}